\numberwithin{equation}{section}
\theoremstyle{definition}\newtheorem{definition}{Definition}%[section]
\theoremstyle{plain}\newtheorem{theorem}[definition]{Theorem}
\theoremstyle{definition}
\theoremstyle{definition}\newtheorem{proposition}[definition]{Proposition}
\theoremstyle{definition}
\theoremstyle{definition}
\newcommand{\R}{\mathbb{R}}
\newcommand{\N}{\mathbb{N}}
\title{Using Landweber method to quantify source conditions - a numerical study.}
\author{Daniel Gerth %\\Technische Universit\"{a}t Chemnitz, Fakult\"{a}t f\"{u}r Mathematik, 09107 Chemnitz \\daniel.gerth@mathematik.tu-chemnitz.de
\footnote{Technische Universit\"at Chemnitz,
Faculty of Mathematics, D-09107 Chemnitz, Germany, daniel.gerth@mathematik.tu-chemnitz.de}}
\begin{document}
\maketitle
\begin{abstract}Source conditions of the type $x^\dag \in\mathcal{R}((A^\ast A)^\mu)$ are an important tool in the theory of inverse problems to show convergence rates of regularized solutions as the noise in the data goes to zero. Unfortunately, it is rarely possible to verify these conditions in practice, rendering data-independent parameter choice rules unfeasible. In this paper we show that such a source condition implies a Kurdyka-\L{}ojasiewicz inequality with certain parameters depending on $\mu$. While the converse implication is unclear from a theoretical point of view, we demonstrate how the Landweber method in combination with the Kurdyka-\L{}ojasiewicz inequality can be used to approximate $\mu$ and conduct several numerical experiments. We also show that the source condition implies a lower bound on the convergence rate which is of optimal order and observable without the knowledge of $\mu$.\end{abstract}
\section{Introduction}\label{sec:intro}
Let $A:X\rightarrow Y$ be a bounded linear operator between Hilbert spaces $X$ and $Y$. A vast class of mathematical problems boils down to the solution of an equation
\begin{equation}\label{eq:problem}
Ax=y.
\end{equation}
In inverse problems, where $A$ is assumed to be ill-posed, i.e., $\mathcal{R}(A)\neq\overline{\mathcal{R}(A)}$, typically only noisy data $y^\delta$ with $\|y-y^\delta\|\leq\delta$, $\delta>0$ are available. In this case, one will not obtain the exact solution $x^\dag$ to \eqref{eq:problem}, but may only hope to find a reasonable approximation $x^\delta$ to $x^\dag$. One of the main questions in the theory of inverse problems is the one of rates of convergence, i.e., finding a function $\varphi:[0,\infty)\rightarrow[0,\infty)$, $\varphi(0)=0$, such that
\begin{equation}\label{eq:crates}
\|x^\dag-x^\delta\|\leq \varphi(\delta)\quad \forall \,0<\delta\leq \delta_0.
\end{equation}
It is known that in general such a function $\varphi$ cannot be found as the convergence may be arbitrarily slow; see, e.g., \cite{EHN96}. Instead, the class of solutions has to be restricted in order to obtain a convergence rate. A classical but still widespread condition for this is to assume that
\begin{equation}\label{eq:sc}
x^\dag\in\mathcal{R}((A^\ast A)^\mu)
\end{equation}
for some $\mu>0$. While this condition is clear and applicable from the theoretical point of view, its practical usefulness is limited. For given $A$ and noisy data $y^\delta$ it is seldom possible to find the correct smoothness parameter $\mu$. Even when the exact solution $x^\dag$ is known it is often not an easy task to determine $\mu$.

We demonstrate the importance of the knowledge of $\mu$ exemplarily by recalling the method of classical Tikhonov regularization. Define
\begin{equation}\label{eq:tikh}
T_\alpha^\delta(x):=\|Ax-y^\delta\|^2+\alpha\|x\|^2.
\end{equation}
The minimizer $x_\alpha^\delta:=\mathrm{argmin}_{x\in X} T_\alpha^\delta(x)$ is called Tikhonov-regularized solution to \eqref{eq:problem} under the noisy data $y^\delta$. The convergence of Tikhonov regularization has been extensively studied, we mention for example the monographs \cite{EHN96,Louis,TikhonovArsenin1977}. All of them contain a version of the following proposition.
\begin{proposition}\label{thm:tikh}
Let $x^\dag\in\mathcal{R}((A^\ast A)^\mu)$ for some $0<\mu\leq1$. Then the minimizers $x_\alpha^\delta$ of \eqref{eq:tikh} satisfy
\begin{equation}\label{eq:rate_tikh}
\|x_\alpha^\delta-x^\dag\|\leq C\delta^{\frac{2\mu}{2\mu+1}}
\end{equation}
with $0<C<\infty$ provided the regularization parameter $\alpha$ is chosen via
\begin{equation}\label{eq:apriori}
c_1 \delta^{\frac{2}{2\mu+1}}\leq \alpha\leq c_2\delta^{\frac{2}{2\mu+1}}
\end{equation}
where $c_1\leq c_2$ are positive constants.
\end{proposition}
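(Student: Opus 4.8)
The plan is to use the spectral calculus for the self-adjoint nonnegative operator $A^\ast A$ and to split the total error into a noise-propagation part and an approximation (bias) part. First I would write the minimizer of \eqref{eq:tikh} in closed form via its first-order optimality condition, the normal equation $(A^\ast A+\alpha I)x_\alpha^\delta = A^\ast y^\delta$, which gives $x_\alpha^\delta=(A^\ast A+\alpha I)^{-1}A^\ast y^\delta$. Introducing the noise-free regularized solution $x_\alpha:=(A^\ast A+\alpha I)^{-1}A^\ast y$ and using $y=Ax^\dag$ (hence $A^\ast y=A^\ast A\,x^\dag$), I would decompose
\begin{equation*}
x_\alpha^\delta-x^\dag=\underbrace{(A^\ast A+\alpha I)^{-1}A^\ast(y^\delta-y)}_{\text{data error}}+\underbrace{\big((A^\ast A+\alpha I)^{-1}A^\ast A-I\big)x^\dag}_{\text{approximation error}}
\end{equation*}
and estimate each summand separately through the triangle inequality.

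For the data error I would apply the spectral estimate $\sup_{\sigma\geq0}\frac{\sigma}{\sigma^2+\alpha}=\frac{1}{2\sqrt\alpha}$ to the corresponding function of $A^\ast A$ (using the singular system of $A$), which together with $\|y^\delta-y\|\leq\delta$ yields $\|(A^\ast A+\alpha I)^{-1}A^\ast(y^\delta-y)\|\leq\frac{\delta}{2\sqrt\alpha}$. For the approximation error I would first simplify $(A^\ast A+\alpha I)^{-1}A^\ast A-I=-\alpha(A^\ast A+\alpha I)^{-1}$ and then insert the source condition \eqref{eq:sc} in the form $x^\dag=(A^\ast A)^\mu w$ with $w\in X$, $\|w\|=\rho$. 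This reduces the bound to the scalar supremum $\sup_{\lambda\geq0}\frac{\alpha\,\lambda^\mu}{\lambda+\alpha}$, which after the substitution $\lambda=\alpha t$ equals $\alpha^\mu\sup_{t\geq0}\frac{t^\mu}{1+t}$.

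The main obstacle—and the source of the restriction $0<\mu\leq1$—lies precisely in this last supremum: the function $t\mapsto t^\mu/(1+t)$ is bounded on $[0,\infty)$ if and only if $\mu\leq1$ (for $\mu>1$ it grows like $t^{\mu-1}$), which reflects the saturation phenomenon of Tikhonov regularization. Denoting the resulting finite supremum by $c_\mu$, I obtain $\|x_\alpha-x^\dag\|\leq c_\mu\rho\,\alpha^\mu$, and combining the two estimates gives
\begin{equation*}
\|x_\alpha^\delta-x^\dag\|\leq\frac{\delta}{2\sqrt\alpha}+c_\mu\rho\,\alpha^\mu .
\end{equation*}
Finally I would balance the two terms: inserting any admissible $\alpha$ from \eqref{eq:apriori}, i.e.\ $\alpha\sim\delta^{2/(2\mu+1)}$, makes both $\delta/\sqrt\alpha$ and $\alpha^\mu$ scale like $\delta^{2\mu/(2\mu+1)}$, so the two contributions can be absorbed into a single constant $C$ depending on $c_1,c_2,c_\mu$ and $\rho$, yielding \eqref{eq:rate_tikh}. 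The exponent $2/(2\mu+1)$ is exactly the one equating the orders of the data and approximation errors, which is why it appears in the parameter choice.
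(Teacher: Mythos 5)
Your proof is correct. The paper does not prove this proposition itself --- it recalls it as a classical result and points to the monographs of Engl--Hanke--Neubauer, Louis, and Tikhonov--Arsenin --- and your argument (normal equation, splitting into data and approximation error, the spectral bounds $\sup_{\sigma\geq 0}\sigma/(\sigma^2+\alpha)=1/(2\sqrt{\alpha})$ and $\sup_{\lambda\geq 0}\alpha\lambda^{\mu}/(\lambda+\alpha)=c_\mu\alpha^{\mu}$ for $\mu\leq 1$, then balancing) is exactly the standard proof found in those references, including the correct identification of the saturation at $\mu=1$.
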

Thus, $\mu$ not only essentially determines the convergence rate but is also crucial to obtain it in practice. We mention that the rate \eqref{eq:rate_tikh} can also be obtained via the discrepancy principle for the choice of $\alpha$; see, e.g., \cite{EHN96,Morozov}. The idea behind this method is to choose $\alpha$ such that $\delta \leq \|Ax_\alpha^\delta-y^\delta\|\leq \tau \delta$ for some $\tau>1$. While the discrepancy principle is applicable independent of the knowledge of $\mu$, it only yields the rate \eqref{eq:rate_tikh} for $0<\mu\leq\frac{1}{2}$. Therefore, if $\mu$ is unknown, one might apply the discrepancy principle in instances where this method is not justified anymore.

\section{The Kurdyka-\L{}ojasiewicz inequality}
A \L{}ojasiewicz-type inequality will be the main tool used for our method. It can be formulated in complete metric spaces and we will do so for the moment. We temporarily consider the abstract problem
\[
f(x)\rightarrow \min_{x\in X}
\] 
where $X$ is a complete metric space with metric $d_X(x,y)$ and $f:X\rightarrow \R\cup \{\infty\}$ is lower semicontinuous. We need some definitions.

We denote with
\begin{equation}
[t_1\leq f\leq t_2]:=\{x\in X: t_1\leq f(x)\leq t_2\}
\end{equation}
the level-set of $f$ to the levels $t_1\leq t_2$. For $t_1=t_2=:t$ we write $[f=t]$ and analogously for the relations $\leq$ and $\geq$. 
For any $x\in X$ the distance of $x$ to a set $S\subset X$ is denoted by
\begin{equation}
\mathrm{dist}(x,S):=\inf_{y\in S} d_X(x,y).
\end{equation}
With this we recall the Hausdorff distance between sets,
\begin{equation}
D(S_1,S_2):=\max\left\lbrace\sup_{x\in S_1}\mathrm{dist}(x,S_2),\sup_{x\in S_2} \mathrm{dist}(x,S_1) \right\rbrace.
\end{equation}
For $x\in X$ we define the strong slope as
\begin{equation}
|\nabla f|(x):=\limsup_{y\rightarrow x}\frac{\max\{f(x)-f(y),0\}}{d(x,y)}.
\end{equation}

%For a Hilbert space $X$ this is simply the norm of the gradient of $f$.
The multi-valued mapping $F:X \rightrightarrows Y$ is called k-metrically regular if for all $(\bar x,\bar y)\in \mathrm{Graph}(F)$ there exist $\epsilon_1,\epsilon_2>0$ such that for all $(x,y)\in B(\bar x,\epsilon_1)\times B(\bar y,\epsilon_2)$ it is
\[
\mathrm{dist}(x,F^{-1}(y))\leq k \,\mathrm{dist}(y,F(x)).
\] 

\begin{definition}
We call the function $\varphi:[0,\bar r)\rightarrow \R$ a smooth index function if $\varphi\in C[0,\bar r)\cap C^1(0,\bar r)$, $\varphi(0)=0$ and $\varphi^\prime(x)>0$ for all $x\in(0,\bar r)$. We denote the set of all such $\varphi$ with $\mathcal{K}(0,\bar r)$.
\end{definition}
In the optimization literature such functions are called \textit{desingularizing functions}. We now present the main theorem our work is based on, taken from \cite{BDLM08}.

\begin{proposition}{\cite[Corollary 7]{BDLM08}}\label{thm:cor7}
Let $f:X\rightarrow \R$ be continuous, strongly slope-regular, i.e., $|\nabla f|(x)=|\nabla (-f)|(x)$, $(0,r_0)\subset f(X)$ and $\varphi\in \mathcal{K}(0, r_0)$. Then the following assumptions are equivalent and imply the non-emptiness of the level set $[f=0]$:
\begin{itemize}
\item $\varphi\circ f$ is $k$-metrically regular on $[0<f<r_0]\times (0,\varphi(r_0))$,
\item for all $r_1,r_2\in (0,r_0)$ it is
\begin{equation}\label{eq:rate_equation_c7}
D([f=r_1],[f=r_2])\leq k |\varphi(r_1)-\varphi(r_2)|,
\end{equation}
\item for all $x\in[0<f<r_0]$ it holds that
\begin{equation}\label{eq:kl_c7}
|\nabla(\varphi\circ f)|(x)\geq \frac{1}{k}.
\end{equation}
\end{itemize}
\end{proposition}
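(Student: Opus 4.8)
The plan is to prove the three statements equivalent by establishing the cycle (first)$\Rightarrow$(second)$\Rightarrow$(third)$\Rightarrow$(first), and to extract the non-emptiness of $[f=0]$ as a by-product of the construction used in the last implication. The unifying observation is a chain rule for the strong slope: since $\varphi$ is continuously differentiable and strictly increasing on $(0,r_0)$, for any $x$ with $f(x)\in(0,r_0)$ one has, by continuity of $f$ and a first-order expansion of $\varphi$ at $f(x)$,
\[
|\nabla(\varphi\circ f)|(x)=\varphi^\prime(f(x))\,|\nabla f|(x).
\]
Thus the slope condition \eqref{eq:kl_c7} is exactly a \L{}ojasiewicz-type inequality $\varphi^\prime(f(x))|\nabla f|(x)\geq\tfrac1k$, and it suffices to track how the slope of $g:=\varphi\circ f$ governs both metric regularity and the motion of level sets. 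Note also that $(\varphi\circ f)^{-1}(s)=[f=\varphi^{-1}(s)]$, so the level sets of $g$ are exactly the level sets of $f$, reparametrised through $\varphi$.

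The implication (first)$\Rightarrow$(second) is immediate. Since $g$ is real-valued, $\mathrm{dist}(s,g(x))=|s-g(x)|$, so $k$-metric regularity of $g$ reads $\mathrm{dist}(x,[g=s])\leq k|g(x)-s|$. Taking any $x\in[f=r_1]$ and $s=\varphi(r_2)$ gives $\mathrm{dist}(x,[f=r_2])\leq k|\varphi(r_1)-\varphi(r_2)|$; exchanging the roles of $r_1$ and $r_2$ and passing to the supremum yields \eqref{eq:rate_equation_c7}.

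For (second)$\Rightarrow$(third) I would argue pointwise. Fix $x$ with $f(x)=r\in(0,r_0)$ and pick a sequence $r_n\uparrow r$. By \eqref{eq:rate_equation_c7} there exist $x_n\in[f=r_n]$ with $d(x,x_n)\leq D([f=r],[f=r_n])\leq k(\varphi(r)-\varphi(r_n))$, and continuity of $f$ forces $x_n\to x$. The difference quotient then satisfies
\[
\frac{\varphi(f(x))-\varphi(f(x_n))}{d(x,x_n)}=\frac{\varphi(r)-\varphi(r_n)}{d(x,x_n)}\geq\frac1k,
\]
so passing to the limsup gives $|\nabla(\varphi\circ f)|(x)\geq\frac1k$. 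Here the hypothesis that $f$ is strongly slope-regular, $|\nabla f|=|\nabla(-f)|$, is what lets me approach the level $r$ from below and still read off the genuine slope, rather than be trapped by one-sided flatness.

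The decisive step is (third)$\Rightarrow$(first), and I expect it to be the main obstacle. The tool is Ekeland's variational principle: given $x$ with $g(x)\in(0,\varphi(r_0))$ and a target level $s<g(x)$, apply Ekeland to the functional $x\mapsto(g(x)-s)_+$ to produce a nearly optimal point $\bar x$ whose slope is controlled; the lower bound $|\nabla g|\geq\frac1k$ then forces $g(\bar x)=s$ and simultaneously yields $\mathrm{dist}(x,[g=s])\leq k(g(x)-s)$, which is precisely $k$-metric regularity. The difficulties are genuinely metric-space difficulties: one cannot invoke a gradient, only the strong slope, so the descent has to be realised through Ekeland's approximate minimisers (equivalently, through curves of near-maximal slope in the sense of De Giorgi), and the two-sided slope-regularity is again needed to approach $s$ from above and below. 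Finally, the non-emptiness of $[f=0]$ comes out of the same construction: iterating the descent produces a curve along which $g$ decreases at rate at least $\frac1k$ per unit arclength, so reaching $g=0$ from a point at level $r<r_0$ costs total length at most $k\varphi(r)<\infty$; completeness of $X$ guarantees the curve has a limit point, and continuity of $f$ identifies it as a point of $[f=0]$.
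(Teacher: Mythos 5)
First, note that the paper offers no proof of this proposition at all: it is quoted verbatim as Corollary 7 of \cite{BDLM08} and used as a black box, so there is no in-paper argument to compare yours against; your attempt has to be judged against the (long and technical) proof in that reference. Your overall architecture is the right one --- the chain rule $|\nabla(\varphi\circ f)|(x)=\varphi^\prime(f(x))\,|\nabla f|(x)$ (valid here because $\varphi\in C^1$ with $\varphi^\prime>0$ and $f$ is continuous, so the limsup defining the slope factors), the reduction of metric regularity of a real-valued map to the distance estimate $\mathrm{dist}(x,g^{-1}(s))\leq k|g(x)-s|$, Ekeland's variational principle for the hard implication, and completeness plus a length estimate for the non-emptiness of $[f=0]$. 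The implications (first)$\Rightarrow$(second) and (second)$\Rightarrow$(third) are essentially complete as you state them (in the latter you should take $x_n$ with $d(x,x_n)\leq \mathrm{dist}(x,[f=r_n])+\tfrac{1}{n}(\varphi(r)-\varphi(r_n))$, since the distance is an infimum and the nonemptiness of $[f=r_n]$ comes from $(0,r_0)\subset f(X)$, but that is cosmetic).

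The genuine gap is in (third)$\Rightarrow$(first), exactly the step you yourself flag as decisive. Applying Ekeland to $(g-s)_+$ with radius $\lambda=k(1+\varepsilon)(g(x)-s)$ produces $\bar x$ with $d(x,\bar x)\leq\lambda$ and slope of $(g-s)_+$ at $\bar x$ at most $\frac{1}{k(1+\varepsilon)}$; combined with \eqref{eq:kl_c7} this forces $(g(\bar x)-s)_+=0$, i.e.\ $g(\bar x)\leq s$ --- \emph{not} $g(\bar x)=s$ as you assert. What you get for free is the sublevel-set estimate $\mathrm{dist}(x,[g\leq s])\leq k(g(x)-s)$, whereas both the metric regularity statement (whose $F^{-1}(s)$ is the exact level set $[g=s]$) and \eqref{eq:rate_equation_c7} concern exact level sets. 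Closing this gap is precisely where the strong slope-regularity $|\nabla f|=|\nabla(-f)|$ is needed: it yields $|\nabla(-(\varphi\circ f))|\geq\frac1k$ as well, so one can run the symmetric Ekeland argument on $(s-g)_+$ to climb from $\bar x$ back up to the level $s$ within distance $k(s-g(\bar x))$, and an iteration/limiting argument then lands on $[g=s]$ within the correct total length budget. By contrast, your appeal to slope-regularity inside (second)$\Rightarrow$(third) is misplaced: approaching $r$ from below already produces points with $g(x)-g(x_n)>0$ and hence directly lower-bounds the \emph{descent} slope $|\nabla(\varphi\circ f)|(x)$; no two-sidedness is used there. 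So: right skeleton and the right tools, but the one implication that carries all the difficulty is not actually carried out, and the hypothesis that makes it work is deployed in the wrong place.
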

While we added the first item for the sake of completeness, the second and third properties are what is interesting to us. Eq. \eqref{eq:rate_equation_c7} states that the (Hausdorff)-distance between the level-sets is governed by the value of the index function $\varphi$ at the value of the level. Assume we have two values $x_1,x_2\in X$ at hand such that $f(x_1)=c_1\delta$, $f(x_2)=c_2\delta$ and that $\varphi$ is concave. Then, by definition,
\[
d_X(x_1,x_2)\leq D([f=c_1\delta],[f=c_2\delta])\leq k|\varphi(c_1\delta)-\varphi(c_2\delta)|
\] 
and without loss of generality $c_1>c_2$ yields 
\[
d_X(x_1,x_2)\leq kc_1\varphi(\delta).
\]
This is a generalization of the convergence rates \eqref{eq:crates} and as such of high interest in the context of inverse problems.

The last item \eqref{eq:kl_c7} is called Kurdyka-\L{}ojasiewicz inequality. The result by \L{}ojasiewicz \cite{loja,loja2}, who originally considered real analytic functions, states in principle that for a $C^1$-function $f$ the quantity
\[
|f(x)-f(\bar x)|^\theta\|\nabla f\|^{-1}
\]
remains bounded around any point $\bar x$. Let for example $f(x)=x^2$ and $\bar x=0$. Then one easily sees that 
\[
|f(x)-f(\bar x)|^\theta\|\nabla f\|^{-1}=\frac{x^{2\theta}}{2x}=\frac{1}{2}
\]
for $\theta=\frac{1}{2}$.

Kurdyka generalized the inequality \eqref{eq:kl_c7} in \cite{Kurdyka} and later other versions showed up, see for example \cite{Bolte,BolteEtAl07,BolteEtAl072}. Useful in our context appears to be the formulation from, e.g., \cite{Bolte},
\begin{equation*}
\varphi^\prime(f(x)-f(\bar x))\mathrm{dist}(0,\partial f(x))\geq 1,
\end{equation*}
where $\varphi$ is as in Proposition \ref{thm:cor7}, $f$ is convex, $\bar x$ is a minimizer of $f$, and $x$ is sufficiently close to $\bar x$. The convexity of $f$ is not necessary as one can move to further generalized subdifferentials e.g., Frechet subdifferentials or the limiting subdifferential. 
If $f$ is differentiable, then we can write
\begin{equation}\label{eq:kl_grad}
\varphi^\prime(f(x)-f(\bar x))\|\nabla f(x)\|\geq 1.
\end{equation}
As we will see in the following one dimensional example, the function $\varphi$ is, roughly speaking, such that $\varphi(f(x)-f(\bar x))$ is essentially a linear function. 

Let $\varphi$ be a smooth index function and assume $f:\R\rightarrow\R$ to be positive, convex, and $f(0)=0$. If $\frac{d}{dx}\varphi(f(x))=\varphi^\prime(f(x))\frac{d}{dx}f(x)=\mathrm{const}$, then $\varphi(f(x))$ must be an (affine) linear function, see the example of a quadratic function above. Please note here the similarity between the chain rule and the Kurdyka-\L{}ojasiewicz inequality \eqref{eq:kl_grad}.
The \L{}ojasiewicz property generalizes this and basically describes the flatness of the functional that is to be minimized around the solution. The flatter this is, the slower the speed of convergence.

\section{Lojasiewicz inequality and source condition in Hilbert spaces}
In Hilbert spaces we may rewrite Proposition \ref{thm:cor7} as follows.
\begin{proposition}\label{thm:equivalentstuff}
Let $f:X\rightarrow \R$ be continuous and $(0,r_0)\subset f(X)$. Let $\varphi$ be a  smooth index function. Then the following assumptions are equivalent.
\begin{itemize}
\item for all $x_1,x_2\in [0<f<r_0]$
\begin{equation}\label{eq:rate_equation}
\|x_1-x_2\|\leq k |\varphi(f(x_1))-\varphi(f(x_2))|
\end{equation}
\item for all $x\in[0<f<r_0]$
\begin{equation}\label{eq:kl}
\varphi^\prime(f(x)-f(\bar x))\|\nabla f(x)\|\geq \frac{1}{k}
\end{equation}
where $\bar x$ minimizes $f$.
\end{itemize}
\end{proposition}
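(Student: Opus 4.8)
The plan is to reduce both statements to the composite map $g:=\varphi\circ f$ and to use that, in a Hilbert space, the strong slope of a differentiable function equals the norm of its gradient. I would first assume (as in Proposition~\ref{thm:cor7}, where $[f=0]$ is nonempty) that $\min f=0$ is attained at $\bar x$, so that $\varphi'(f(x)-f(\bar x))=\varphi'(f(x))$. Since $\varphi'>0$, the chain rule gives $\nabla g(x)=\varphi'(f(x))\,\nabla f(x)$ and hence $\|\nabla g(x)\|=\varphi'(f(x))\|\nabla f(x)\|$, so that \eqref{eq:kl} is precisely $\|\nabla g(x)\|\geq 1/k$ on $[0<f<r_0]$. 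Because a differentiable function satisfies $|\nabla g|(x)=|\nabla(-g)|(x)=\|\nabla g(x)\|$, strong slope-regularity holds automatically and this inequality coincides with the Kurdyka-\L{}ojasiewicz inequality \eqref{eq:kl_c7}. In the same notation \eqref{eq:rate_equation} reads $\|x_1-x_2\|\leq k\,|g(x_1)-g(x_2)|$, so the proposition asserts that $g$ is co-Lipschitz on $[0<f<r_0]$ if and only if $\|\nabla g\|\geq 1/k$ there.

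For \eqref{eq:rate_equation}$\Rightarrow$\eqref{eq:kl} I would expand to first order. Fixing $x\in[0<f<r_0]$ with $\nabla f(x)\neq 0$ and inserting $x_1=x$, $x_2=x+t\,\nabla f(x)/\|\nabla f(x)\|$ into \eqref{eq:rate_equation} for small $t>0$, differentiability of $f$ and $\varphi$ gives $|g(x_1)-g(x_2)|=t\,\varphi'(f(x))\|\nabla f(x)\|+o(t)$ while $\|x_1-x_2\|=t$. Dividing by $t$ and letting $t\downarrow 0$ yields $1\leq k\,\varphi'(f(x))\|\nabla f(x)\|$, which is \eqref{eq:kl}. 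This direction is routine.

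The converse \eqref{eq:kl}$\Rightarrow$\eqref{eq:rate_equation} carries the real content and is essentially Proposition~\ref{thm:cor7}. Granting the equivalence \eqref{eq:kl_c7}$\Leftrightarrow$\eqref{eq:rate_equation_c7}, the bound $\|\nabla g\|\geq 1/k$ already yields the Hausdorff estimate \eqref{eq:rate_equation_c7}; the underlying mechanism is that along the curve of steepest descent of $g$ the arc length accrued in passing from level $\varphi(r_1)$ to level $\varphi(r_2)$ equals $\int 1/\|\nabla g\|\,|dg|\leq k\,|\varphi(r_1)-\varphi(r_2)|$, which controls $\mathrm{dist}(x_1,[f=r_2])$. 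The step I expect to be the main obstacle is precisely the passage from this Hausdorff-type bound to the pointwise statement \eqref{eq:rate_equation}: the descent argument only estimates the distance from $x_1$ to the \emph{nearest} point of the target level set, whereas \eqref{eq:rate_equation} compares two \emph{arbitrary} points. The two quantities agree only when the level sets are effectively one-dimensional or when $x_2$ is taken to be the point where the descent curve issuing from $x_1$ meets $[f=f(x_2)]$; I would therefore either restrict the admissible pairs to such configurations or invoke the additional structure of $f$ available in the source-condition setting to identify $\|x_1-x_2\|$ with the arc length, thereby closing the chain of inequalities.
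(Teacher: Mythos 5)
Your reduction to $g=\varphi\circ f$ and the identification $|\nabla g|(x)=|\nabla(-g)|(x)=\|\nabla g(x)\|=\varphi'(f(x))\,\|\nabla f(x)\|$ for a differentiable composite is exactly the translation the paper performs, silently, when it presents this proposition as the Hilbert-space form of Proposition \ref{thm:cor7}; the paper supplies no further proof. Your first-order expansion for \eqref{eq:rate_equation}$\Rightarrow$\eqref{eq:kl} is correct and complete (it even shows as a by-product that $\nabla f\neq 0$ on $[0<f<r_0]$, so \eqref{eq:kl} is not violated at critical points).

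The obstacle you flag in the converse direction is not a defect of your argument: it is a genuine gap, and it cannot be closed, because \eqref{eq:rate_equation} for \emph{arbitrary} pairs is strictly stronger than the Hausdorff statement \eqref{eq:rate_equation_c7} and the implication \eqref{eq:kl}$\Rightarrow$\eqref{eq:rate_equation} is false as literally stated. Take $X=\R^2$, $f(x)=\|x\|^2$, $\varphi(t)=\sqrt{t}$; then $\varphi'(f(x))\|\nabla f(x)\|=\frac{1}{2\|x\|}\cdot 2\|x\|=1$, so \eqref{eq:kl} holds with $k=1$, yet for antipodal points $x_1=-x_2$ on a circle the right-hand side of \eqref{eq:rate_equation} vanishes while $\|x_1-x_2\|=2\|x_1\|>0$. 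The underlying issue is that the inequality $\|x_1-x_2\|\leq D([f=f(x_1)],[f=f(x_2)])$, which the paper also invokes without comment in the discussion following Proposition \ref{thm:cor7}, is simply not true: the Hausdorff distance controls $\mathrm{dist}(x_1,[f=f(x_2)])$, the distance to the \emph{nearest} point of the other level set, not the distance to a prescribed $x_2$ on it. The correct conclusion is therefore \eqref{eq:rate_equation} with $\|x_1-x_2\|$ replaced by $\mathrm{dist}(x_1,[f=f(x_2)])$, equivalently the level-set form \eqref{eq:rate_equation_c7}; this weaker statement is what Proposition \ref{thm:cor7} delivers and is also what the convergence-rate application actually needs, since there one measures the distance of an iterate to the solution set $[f=0]=x^\dag+\mathcal{N}(A)$ (a single point when $A$ is injective) rather than to an arbitrary second element of a level set. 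Your proposed repairs, restricting the admissible pairs or exploiting the structure of the least-squares functional, are exactly the right way to reconcile the statement with what is provable.
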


As mentioned before, setting $x_1:=x^\delta$ and $x_2:=x^\dag$ with $x^\delta$ and $x^\dag$ from Section \ref{sec:intro}, immediately yields convergence rates of the form \eqref{eq:crates}. It is therefore interesting how \eqref{eq:kl} relates to the source condition \eqref{eq:sc}. We will now show that the classical source condition $x^\dag\in\mathcal{R}((A^\ast A)^\mu)$ implies $\eqref{eq:kl}$ for the squared residual $f(x)=\|Ax-y\|^2$.

Consider the noise-free least-squares minimization
\begin{equation}\label{eq:leastsquares}
f(x):=\|Ax-y\|^2\rightarrow \min_x
\end{equation}
where $y=Ax^\dag$ and as before $x^\dag\in X$ is the exact solution to \eqref{eq:problem} in the Hilbert space $X$. Since \eqref{eq:leastsquares} is differentiable with $\nabla f=2A^\ast(Ax-y)$ we may verify the smooth KL-inequality \eqref{eq:kl}. In order to do so we assume, analogously to \eqref{eq:sc}, 
\begin{equation}\label{eq:sc2}
x-x^\dag =(A^\ast A)^\mu w
\end{equation}
for some $\mu>0$ and $w$ in $X$. The only other ingredient is the well-known interpolation inequality
\begin{equation}\label{eq:interpol}
\|(A^\ast A)^r x\|\leq \|(A^\ast A )^qx\|^{\frac{r}{q}}\|x\|^{1-\frac{r}{q}},
\end{equation}
for all $q>r\geq 0$; see \cite{EHN96}. We are looking for a function $\varphi(t)=ct^{\kappa+1}$ with unknown exponent $-1<\kappa<0$. Then $\varphi^\prime(t)=c(1+\kappa)t^\kappa$. Thus \eqref{eq:kl} reads, with $\bar x=x^\dag$,
\[
f(x)^\kappa\|\nabla f(x)\|\geq c
\]
or equivalently
\[
f(x)^{-\kappa}\leq c\|\nabla f(x)\|.
\]
With $f(x)$ as above and $[\nabla f](x)=2A^\ast(Ax-y)$ we have

\begin{align*}
\left(\|Ax-y\|^2\right)^{-\kappa}&=\left(\|A(x-x^\dag)\|\right)^{-2\kappa}\\
&=\left(\|A(A^\ast A)^\mu w\|\right)^{-2\kappa}\\
&=\left(\|(A^\ast A)^{\mu+\frac{1}{2}} w\|\right)^{-2\kappa}\\
&\leq\left( \|(A^\ast A)^{\mu+1} w\|^{\frac{\mu+\frac{1}{2}}{\mu+1}}\|w\|^{1-\frac{\mu+\frac{1}{2}}{\mu+1}}\right)^{-2\kappa}\\
&=\|(A^\ast A)^{\mu+1}w\|^{-2\kappa\frac{\mu+\frac{1}{2}}{\mu+1} }\|w\|^{-\frac{\kappa}{\mu+1}}\\
&=\|A^\ast(Ax-y) \|^{-2\kappa\frac{\mu+\frac{1}{2}}{\mu+1} }\|w\|^{-\frac{\kappa}{\mu+1}}.
\end{align*}
The correct $\kappa$ is the one satisfying 
\[
-2\kappa\frac{\mu+\frac{1}{2}}{\mu+1}=1,
\]
i.e.,
\[
\kappa=-\frac{\mu+1}{2\mu+1}.
\]
Thus our original function $\varphi(t)=c t^{\kappa+1}$ is given by
\begin{equation}\label{eq:vsrphi_sc_mu}
\varphi(t)=c t^{\frac{\mu}{2\mu+1}}.
\end{equation}
In terms of \eqref{eq:rate_equation} this yields for $f(x)=\delta^2$ from \eqref{eq:leastsquares} the rate $\varphi(\delta^2)=c \delta^{\frac{2\mu}{2\mu+1}}$ which is well-known from the theory of optimal convergence rates. Note that we also obtain $\|w\|^{-\frac{\kappa}{\mu+1}}=\|w\|^{\frac{1}{2\mu+1}}$ which is the standard estimate for this term, see, e.g., \cite{EHN96,Louis}.

Unfortunately, at the moment it is not clear whether a \L{}ojasiewicz-inequality \eqref{eq:kl} implies that a source condition \eqref{eq:sc} holds. Nevertheless, we will see that often it is possible to find an approximation to $\mu$ in \eqref{eq:sc} by looking for a function fulfilling \eqref{eq:kl}.

\section{Observable lower bounds for the reconstruction error}

We have seen that the source condition \eqref{eq:sc} implies a \L{}ojasiewicz inequality with $\varphi$ from \eqref{eq:vsrphi_sc_mu}, which implies a convergence rate \eqref{eq:rate_equation} i.e., an upper bound on the distance between to elements in $X$ based on the distance of their images under $A$ in $Y$. In the following we show that the source condition also implies a lower rate, and that upper and lower boundary on the convergence rate are of the same order, disagreeing only in a constant factor. Even more, this lower bound is easily computable in practice, and no information on $\mu$ is required to do so.

\begin{theorem}
Let $A:X\rightarrow Y$ be a compact linear operator between Hilbert spaces $X$ and $Y$. Let $x^\dag\in X$ fulfill a source condition \eqref{eq:sc} for some $\mu>0$ and let $\| Ax-Ax^\dag||$ sufficiently small. Then, whenever $\nabla (\|Ax-Ax^\dag\|^2)\neq 0$, it is
\begin{equation}\label{eq:doublebound}
c_1  \varphi(\|Ax-Ax^\dag\|^2) \leq \|x-x^\dag\| \leq c_2 \varphi(\|Ax-Ax^\dag\|^2)
\end{equation}
with constants $0<c_1<c_2<\infty$ and $\varphi(t)=t^{\frac{\mu}{2\mu+1}}$ from \eqref{eq:vsrphi_sc_mu}.
\end{theorem}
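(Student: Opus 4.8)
The plan is to treat the two halves of \eqref{eq:doublebound} separately, since they rest on opposite ends of the interpolation inequality \eqref{eq:interpol}. Throughout I write $f(x)=\|Ax-Ax^\dag\|^2$, use the source representation $x-x^\dag=(A^\ast A)^\mu w$ from \eqref{eq:sc2}, and record the three spectral quantities
\[
\|x-x^\dag\|=\|(A^\ast A)^\mu w\|,\qquad \|A(x-x^\dag)\|=\|(A^\ast A)^{\mu+\frac12}w\|,\qquad \tfrac12\|\nabla f(x)\|=\|(A^\ast A)^{\mu+1}w\|,
\]
whose exponents $\mu<\mu+\tfrac12<\mu+1$ are equally spaced. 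The right half of \eqref{eq:doublebound} is then nothing more than the computation leading to \eqref{eq:vsrphi_sc_mu}: applying \eqref{eq:interpol} with $r=\mu$, $q=\mu+\tfrac12$ and the vector $w$ gives $\|x-x^\dag\|\le\|A(x-x^\dag)\|^{2\mu/(2\mu+1)}\|w\|^{1/(2\mu+1)}$, which is exactly $\|x-x^\dag\|\le c_2\,\varphi(f(x))$ with $c_2=\|w\|^{1/(2\mu+1)}$. This is also the bound \eqref{eq:rate_equation} of Proposition~\ref{thm:equivalentstuff} specialised to the pair $x,x^\dag$.

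For the lower bound I would start from the Cauchy--Schwarz identity in the spectral picture. Since $\mu+\tfrac12$ is the midpoint of $\mu$ and $\mu+1$,
\[
f(x)=\|A(x-x^\dag)\|^2=\langle (A^\ast A)^\mu w,(A^\ast A)^{\mu+1}w\rangle\le \|x-x^\dag\|\cdot\tfrac12\|\nabla f(x)\|,
\]
so that, wherever $\nabla f(x)\neq0$,
\[
\|x-x^\dag\|\ \ge\ \frac{2\,f(x)}{\|\nabla f(x)\|}.
\]
This is the genuinely observable estimate advertised in the section: both $f(x)=\|Ax-Ax^\dag\|^2$ and $\|\nabla f(x)\|=2\|A^\ast(Ax-Ax^\dag)\|$ are computable without any knowledge of $\mu$, and the hypothesis $\nabla f(x)\neq0$ is precisely what keeps the quotient well defined. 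It then remains to show that this quotient is itself of order $\varphi(f(x))$, i.e. to produce an upper bound $\|\nabla f(x)\|\le C\,f(x)^{(\mu+1)/(2\mu+1)}$; inserting it into the previous display yields the left half of \eqref{eq:doublebound} with $c_1=2/C$.

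The hard part is exactly this last estimate. It asks to control the highest-exponent quantity $\|(A^\ast A)^{\mu+1}w\|$ by the middle one $\|(A^\ast A)^{\mu+\frac12}w\|$, which is the \emph{reverse} of the direction \eqref{eq:interpol} supplies: interpolation bounds $\|(A^\ast A)^{\mu+\frac12}w\|$ from above by $\|w\|$ and $\|(A^\ast A)^{\mu+1}w\|$, hence it bounds $\|\nabla f\|$ only from below. I would therefore not expect a constant $C$ uniform in $w$, and would instead argue from the singular-value expansion $x-x^\dag=\sum_n\sigma_n^{2\mu}w_n u_n$. For a single active mode one checks directly that the three displayed quantities satisfy \eqref{eq:doublebound} with \emph{equality} and $c_1=c_2=\|w\|^{1/(2\mu+1)}$, and the Cauchy--Schwarz step above is then also an equality; the general case is handled by summing over the spectrum and using compactness of $A$ (so $\sigma_n\le\|A\|$ and the series converge).

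I expect the obstacle to be structural rather than computational: the resulting $c_1$ must depend on the source element $w$ (and on $\|A\|$) and will degenerate as $\|x-x^\dag\|\to0$ along purely high-frequency directions, because mixing many modes with rapidly decaying $\sigma_n$ makes $f(x)$ arbitrarily small relative to $\|x-x^\dag\|^{(2\mu+1)/\mu}$. Consequently the honest reading of the statement is that the constants in \eqref{eq:doublebound} are fixed by the source representer $w$ and by the requirement that $\|A(x-x^\dag)\|$ be small; the genuine content is that both the upper bound and the observable quotient $2f(x)/\|\nabla f(x)\|$ carry the same exponent $\mu/(2\mu+1)$, so that the reconstruction error and the computable lower bound are of the same optimal order, differing only in a multiplicative constant.
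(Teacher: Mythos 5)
Your treatment of the right-hand inequality coincides with the paper's: interpolation \eqref{eq:interpol} with $r=\mu$, $q=\mu+\tfrac12$ gives $\|x-x^\dag\|\le\|w\|^{1/(2\mu+1)}\varphi(\|Ax-Ax^\dag\|^2)$, which is what the paper invokes via Proposition~\ref{thm:equivalentstuff}. For the left-hand inequality you also start exactly as the paper does, with the Cauchy--Schwarz step \eqref{eq:rate_uplow}--\eqref{eq:lb} yielding the observable bound $\|x-x^\dag\|\ge\|Ax-Ax^\dag\|^2/\|A^\ast(Ax-Ax^\dag)\|$. The divergence is in how the denominator is then controlled. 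The paper writes
\[
\|(A^\ast A)^{\mu+1}w\|\le\|(A^\ast A)^{\mu+\frac12}w\|^{\frac{\mu+1}{\mu+\frac12}}\,\|w\|^{1-\frac{\mu+1}{\mu+\frac12}},
\]
i.e.\ it applies \eqref{eq:interpol} with $r=\mu+1$ and $q=\mu+\tfrac12$. Since \eqref{eq:interpol} is stated (and valid) only for $q>r\ge0$, and here $r>q$ with a negative exponent on $\|w\|$, this is precisely the ``reverse'' estimate you identify as not being supplied by interpolation. Your diagnosis of where the difficulty sits is therefore accurate, and it applies verbatim to the paper's own argument.

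However, your proposed repair does not close the gap, and no repair can recover the paper's final display with the constant $\|w\|^{1/(2\mu+1)}$: take $A^\ast A$ diagonal with eigenvalues $1$ and $\epsilon^2$ on the first two modes, $w=\delta(e_1+e_2)$, $x-x^\dag=(A^\ast A)^\mu w$. Then $\|x-x^\dag\|=\delta\sqrt{1+\epsilon^{4\mu}}$, $\|Ax-Ax^\dag\|=\delta\sqrt{1+\epsilon^{4\mu+2}}$, $\|w\|=\delta\sqrt2$, and the claimed inequality $\|x-x^\dag\|\ge\|Ax-Ax^\dag\|^{2\mu/(2\mu+1)}\|w\|^{1/(2\mu+1)}$ reduces (the $\delta$'s cancel, so the residual can be made arbitrarily small) to $\sqrt{1+\epsilon^{4\mu}}\ge(1+\epsilon^{4\mu+2})^{\mu/(2\mu+1)}2^{1/(2(2\mu+1))}$, which fails as $\epsilon\to0$ since the right side tends to $2^{1/(2(2\mu+1))}>1$. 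Your single-mode equality observation explains why the bound looks plausible, but summing over modes destroys it, exactly as your closing paragraph half-concedes. What both you and the paper genuinely establish is the computable estimate \eqref{eq:lb}; the further claim that this quotient is bounded below by $c_1\varphi(\|Ax-Ax^\dag\|^2)$ uniformly over the admissible $x$ is proved by neither argument, so you should not regard the missing step as a defect of your attempt alone.
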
 
\begin{proof}
The upper bound follows from Proposition \ref{thm:equivalentstuff}. To obtain the lower bound, we note that
\begin{equation}\label{eq:rate_uplow}
\|Ax-Ax^\dag\|^2=\langle x-x^\dag,A^\ast (Ax-Ax^\dag)\rangle\leq \|x-x^\dag\|\,\|A^\ast(Ax-Ax^\dag)\|
\end{equation}
and hence
\begin{equation}\label{eq:lb}
\frac{\|Ax-Ax^\dag\|^2}{\|A^\ast(Ax-Ax^\dag)\|}\leq \|x-x^\dag\|.
\end{equation}
We now use the interpolation equality \eqref{eq:interpol} in the denominator. Together with \eqref{eq:sc2} it follows that
\begin{align*}
\|A^\ast(Ax-Ax^\dag)\|&=\|(A^\ast A)^{\mu+1} w\|\\
&\leq \|(A^\ast A)^{\frac{1}{2}+\mu}w\|^{\frac{\mu+1}{\mu+\frac{1}{2}}}\|w\|^{1-\frac{\mu+1}{\mu+\frac{1}{2}}}\\
&= \|Ax-Ax^\dag\|^{\frac{2\mu+2}{2\mu+1}}\|w\|^{-\frac{1}{2\mu+1}}
\end{align*}
Inserting this into \eqref{eq:lb} yields
\begin{align*}
\|x-x^\dag\|\geq \|Ax-Ax^\dag\|^{\frac{2\mu}{2\mu+1}}\|w\|^{\frac{1}{2\mu+1}}
\end{align*}
\end{proof}
Equation \eqref{eq:lb} is not limited to Hilbert spaces. In particular when only $X$ is a Banach spaces with dual space $X^\ast$, then one simply needs to use the dual product $\langle\cdot,\cdot\rangle_{X^\ast\times X}$ instead of the scalar product. Another advantage of \eqref{eq:lb} is that the lower bound is often easily observable since it only contains the (squared) residual and the gradient. Since under the source condition \eqref{eq:sc2} also
\[
c_1 \|Ax-Ax^\dag\|^{\frac{2\mu}{2\mu+1}}\leq \frac{\|Ax-Ax^\dag\|^2}{\|A^\ast(Ax-Ax^\dag)\|}\leq c_2\|Ax-Ax^\dag\|^{\frac{2\mu}{2\mu+1}},
\]
it appears tempting to try to estimate $\mu$ also from the lower bound, but it can easily be seen that this leads to the same regression problem than the upper bound which we will discuss in the next section. However, we found it useful to check the credibility of the estimated $\mu$. In particular, as we will show in the numerical examples that the lower bound it allows to detect when the noise takes over or when the discretization is insufficient.

\section{From inequality to algorithm}\label{sec:alg}
Many iterative methods for the minimization of a functional $f(x)$ use the gradient of $f$ at the current iterate to update the unknown. For $f(x)=\|Ax-y\|^2$, the gradient $A^\ast (Ax-y)$ includes the calculation of the residual $Ax-y$. Therefore, any such iterative method allows to store and compute the values $\|Ax_k-y\|^2$ and $\|A^\ast(Ax_k-y)\|$ as the iteration goes on. Since the \L{}ojasiewicz inequality relates both values through an unknown one dimensional function, we may use the acquired values of gradients and residuals to estimate that function via a regression.

Our particular choice of the iterative algorithm is the Landweber method, but the algorithm is not restricted to this choice. This is a well-known method to  minimize \eqref{eq:leastsquares}; see, e.g., \cite{EHN96,Louis}. Starting from an initial guess $x_0$, it consists in iterating
\begin{equation}\label{eq:lw_free}
x_{k+1}=x_k-\beta A^\ast (Ax_k-y)
\end{equation}
for $k=0,1,\dots,K$ where $0<\beta< \frac{2}{\|A\|^2}$ and $K$ is the stopping index.

Let $\{x_k\}_{k=1,\dots,K}$ be the sequence of iterates obtained from the Landweber method. During the iterations we compute and store the norms of both residual and gradient, i.e., we acquire two vectors
\[
R:=(\|Ax_1-y\|,\|Ax_2-y\|,\|Ax_3-y\|,\dots,\|Ax_K-y\|)^T
\]
and
\[
G:=(\|A^\ast (Ax_1-y)\|,\|A^\ast (Ax_2-y)\|,\|A^\ast (Ax_3-y)\|,\dots,\|A^\ast(Ax_K-y)\|)^T.
\]
If the \L{}ojasiewicz property \eqref{eq:kl_grad} holds with $\bar x=x^\dag$, then there is a smooth index function $\varphi$ such that 
\begin{equation}\label{eq:kl_ls}
\varphi^\prime(R_i^2)\cdot G_i\geq 1 \quad \forall i=1,\dots,K,
\end{equation}
where the constant $k$ appearing in \eqref{eq:kl_grad} has been moved into the function $\varphi^\prime$.

As we have seen previously, the source condition \eqref{eq:sc} implies $\varphi(t)=ct^{\frac{\mu}{2\mu+1}}$ and hence $\varphi^\prime(t)=ct^{-\frac{\mu+1}{2\mu+1}}$. Setting
\[
\gamma:=\frac{2\mu+2}{2\mu+1},
\]
we have
\[
\varphi^\prime(R_i^2)=c(R_i^2)^{-\frac{\mu+1}{2\mu+1}}=cR_i^{-\gamma}\quad \forall i=1,\dots,K,
\]
and therefore we obtain
\begin{equation}\label{eq:regression_pre}
\frac{R_i^\gamma}{c}\leq G_i \quad \forall i=1,\dots,K,
\end{equation}
where we have combined the upper and lower constant in one single constant. In practice of course we do not know $\gamma$ or the constant $c$, but we have many data pairs $(R_i,G_i)$, $i=1,\dots,K$. Therefore, we may use the data to estimate $\gamma$ and $c$ via a regression approach. To this end, we take the $\log$ of \eqref{eq:regression_pre} which yields, replacing the inequality with an equality,
\begin{equation}\label{eq:regression}
\gamma\log(R_i)-\log(c)= \log(G_i) \quad \forall i=1,\dots,K.
\end{equation}
This is a linear regression problem for the variables $c_{l}:=\log(c)$ and $\gamma$. We write it in matrix form
\begin{equation}\label{eq:reg_matrix}
A_K\begin{pmatrix} \gamma \\ c_l\end{pmatrix}= b_K
\end{equation}
where 
\[
A_K:=\begin{pmatrix}
\log(R_1) &-1\\
 \log(R_2) &-1\\
\vdots & \vdots \\
 \log(R_K) &-1
\end{pmatrix}, \quad b_K=\begin{pmatrix}
\log(G_1)\\\log(G_2)\\ \vdots \\\log(G_K)
\end{pmatrix}.
\]

It is well known (see, e.g., \cite{shao}) that the best estimator, i.e., the estimator yielding the least error variance, for $[\gamma,c_l]^T$ in \eqref{eq:reg_matrix} is
\[
\begin{pmatrix}
  \hat\gamma\\\hat c_l
\end{pmatrix}= (A_K^TA_K)^{-1}A_K^Tb_K.
\]
This immediately yields $\mu_K=\frac{2-\hat\gamma}{2\hat\gamma-2}$ and $c_K=\exp(\hat c_{l})$. 

In our numerical experiments we found that it is more appropriate to do a regression after each iterate instead of just one at the end. We therefore define $\mu_k$ as estimates for $\mu$ when only the first $k$ rows of $A_K$ and $b_K$ are used. Analogously we have $c_k$ as estimated constant after $k$ iterations. This allows to track the smoothness parameter throughout the iteration. As shown in the next section, $\mu_k$ and $c_k$ are not constant but often vary as $k$ increases. However, we often find intervals $k_1,\dots,k_2$, $1\leq k_1<k_2\leq K$ where both $\mu_k$ and $c_k$ are approximately stable. In this region we observed that the parameters were estimated reasonably well. Since we do not have a fully automated system yet we pick the final estimate for $\mu$ by hand.

\section{Numerical experiments}
Using the method suggested in Section \ref{sec:alg} we now try to estimate the smoothness parameter $\mu$ for various linear ill-posed problems \eqref{eq:problem}. We give two types of plots. In the first type we plot the estimated $\mu$ against the Landweber iteration number. This should yield a constant value for $\mu$. In the second type of plots, we give the lower bound from \eqref{eq:lb}. If available, we also plot the measured reconstruction error $||x^\dag-x^\delta||$ and for the diagonal examples we show the upper bound from \eqref{eq:rate_uplow}. All of these functions are plotted against the residual $||Ax-y||$ in a loglog-plot. This should yield a linear function whose constant slope is again a measure for $\mu$.

\subsection{Benchmark case: diagonal operators}\label{sec:diag}
We start with the (mostly) academic example of a diagonal operator. While this is certainly not a particularly practical situation, it allows to see the chances and limitations of our approach due to the direct control of the parameters and low computational cost.

Let $A:\ell^2\rightarrow \ell^2$, $A: (x_1,x_2,x_3,\cdots)\mapsto (\sigma_1x_1,\sigma_2x_2,\sigma_3 x_3,\dots)$ for $\sigma_i\in \R$, $i\in\N$. To make the example even more academic, let $\sigma_i=i^{-\beta}$ for some $\beta>0$ and assume additionally that $x^\dag$ is given as $x_i^\dag=i^{-\eta}$, $i\in\N$, with $\eta>0$. Following \cite[Proposition 3.13]{EHN96} we have for a compact linear operator $A$ between Hilbert spaces $X$ and $Y$ with singular system $\{\sigma_i,u_i,v_i\}_{i=1}^\infty$ that
\begin{equation}\label{eq:verif_smoothness}
x\in \mathcal{R}((A^\ast A)^\mu)\quad \Leftrightarrow \quad \sum_{i=1}^\infty \frac{|\langle Ax,u_i\rangle|^2}{\sigma_i^{2+4\mu}}<\infty.
\end{equation}
 One quickly verifies that therefore $x\in\mathcal{R}((A^\ast A)^\mu)$ for $\mu\leq\frac{2\eta-1}{4\beta}-\epsilon$ and any small $\epsilon>0$. Numerically it appears unfeasible to determine $\mu$ to such a high precision. Therefore we give $\mu_{exact}=\frac{2\eta-1}{4\beta}$ although the source condition is just barely not satisfied in this case. Nevertheless, it gives an excellent fit for the diagonal operators where we have full control over the smoothness properties.

We present figures for different combinations of $\eta$ and $\beta$. More precisely, we have $\eta=1$ and $\beta=2.5$ in Figure \ref{fig:diag1}, $\eta=2$ and $\beta=2$ in Figure \ref{fig:diag2}, $\eta=2$ and $\beta=1$ in Figure \ref{fig:diag3}, and $\beta=1.5$ and $\eta=3$ in Figure \ref{fig:diag4}. After a burn-in time between 50 and 100 iterations a fairly accurate estimate of $\mu$ is achieved in all the examples. Except for the case $\eta=2$ and $\beta=1$, $\mu$ even seems to converge to the exact value as the iterations increase. The ``exception'' is easily explained when looking the reconstruction errors and the observed lower bound. We show the results for $\eta=2$ and $\beta=2$ in Figure \ref{fig:diag2LB}, where we have an almost perfect match between the reconstruction error, its lower and its upper bound. For the seemingly critical case $\eta=2$ and $\beta=1$ the same plot, shown in Figure \ref{fig:diag3LB}, reveals the the reason for the diverging estimate of $\mu$. In Figure \ref{fig:diag3LB}, the lower bound and the measured reconstruction error have two distinct phases with different slopes and a small transitioning phase. In the first phase, where the residuals are large, we have the slope corresponding to the good estimate of $\mu$ in Figure \ref{fig:diag3}. In the second phase, corresponding to the small residuals, the slope is approximately 1. What happens is that the discretization is too small, such that at some point the algorithm sees the full matrix $A$ as a well-posed operator from $\R^n$ to $\R^n$ instead of the discretization of the ill-posed problem $A$. Indeed, if we increase the discretization level, we obtain a plot with the correct asymptotics similar to Figure \ref{fig:diag1} and \ref{fig:diag3}.

Let us also check the algorithm when the source condition \eqref{eq:sc} is not strictly fulfilled or even violated. To this end we first let $\sigma_i=i^{-3/2}$ and $x_i=e^{-i}$, i.e., $x^\dag$ fulfills the source condition for all $\mu>0$. Over the iterations, $\mu$ starts large, approaches a value of about $\mu_{min}=0.5$ and then slowly starts to grow again; see Figure \ref{fig:expon}. The constant behaves similarly. The \L{}ojasiewicz inequality and therefore our algorithm seem to capture some kind of ``maximal'' smoothness which in this example is not adequately described with the source condition \eqref{eq:sc} for any $\mu>0$.
 We now move the exponential to the operator, i.e., consider the case $x_i=i^{-2}$ and $\sigma_i=e^{-i}$ such that $x^\dag$ fails to fulfill the source condition \ref{eq:sc} for any $\mu>0$. The result, shown in Figure \ref{fig:expon_op}, reveals a new pattern. Both $\mu$ and $c$ describe a sinusoidal graph. In neither of the two examples we found a section where $c$ and $\mu$ remain approximately stable. We conclude that the lack of such stability indicates the violation of the source condition \eqref{eq:sc}.

\begin{figure}
\includegraphics[width=\textwidth]{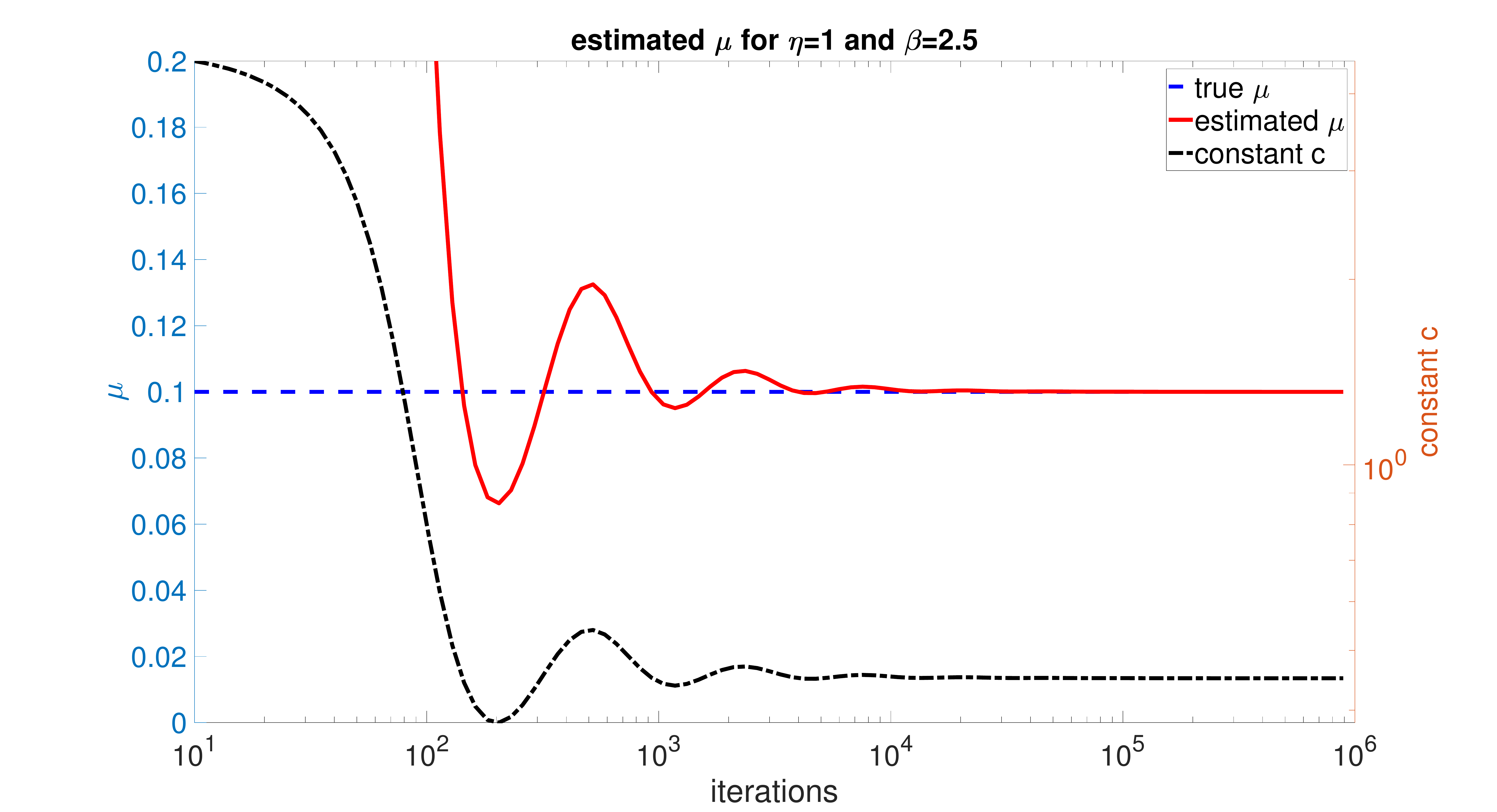}\caption{Demonstration of the method for $\eta=1$ and $\beta=2.5$. Dashed: true $\mu=0.1$, solid: estimated $\mu$, dash-dotted: estimated $c$; plotted over the iterations. From the stable section we estimate $\mu\approx0.1$.}\label{fig:diag1}
\end{figure}

\begin{figure}
\includegraphics[width=\textwidth]{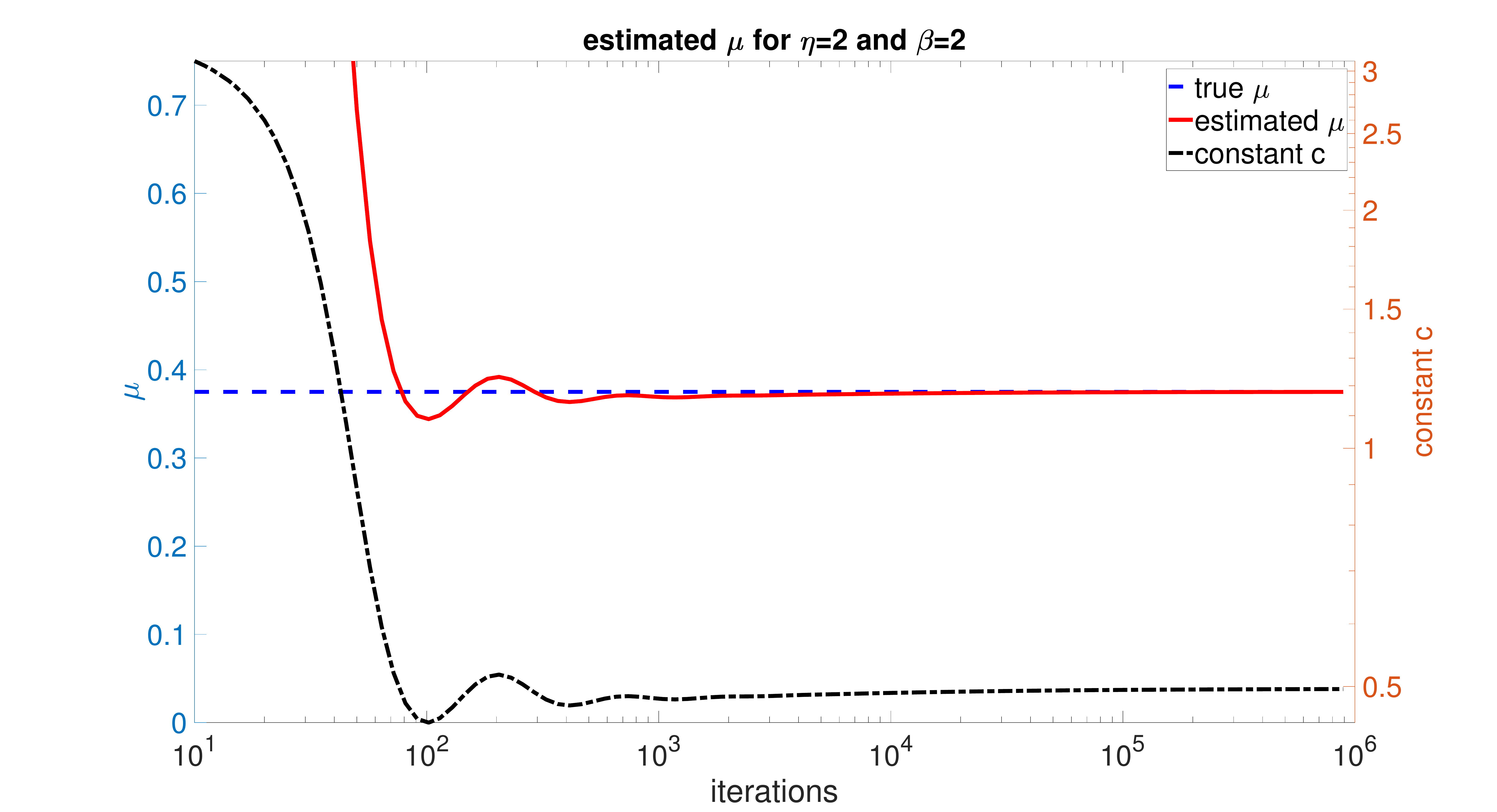}\caption{Demonstration of the method for $\eta=2$ and $\beta=2$. Dashed: true $\mu=0.375$, solid: estimated $\mu$, dash-dotted: estimated $c$; plotted over the iterations. From the stable section we estimate $\mu\approx 0.375$.}\label{fig:diag2}
\end{figure}

\begin{figure}
\includegraphics[width=\textwidth]{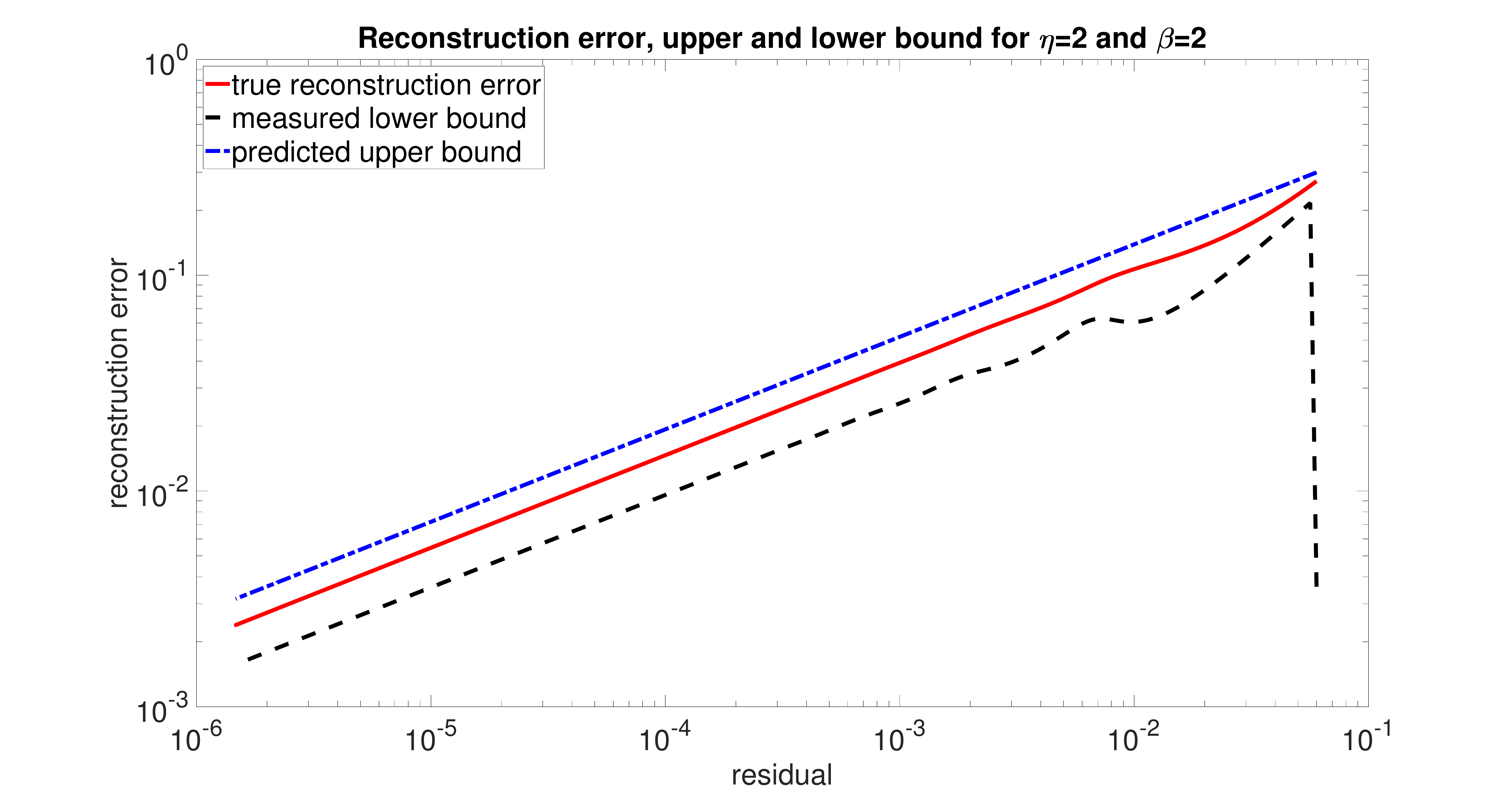}\caption{Reconstruction error for $\eta=2$ and $\beta=2$. Solid: measured error, dash-dotted: upper bound from the source condition, dashed: observed lower bound.}\label{fig:diag2LB}
\end{figure}

\begin{figure}
\includegraphics[width=\textwidth]{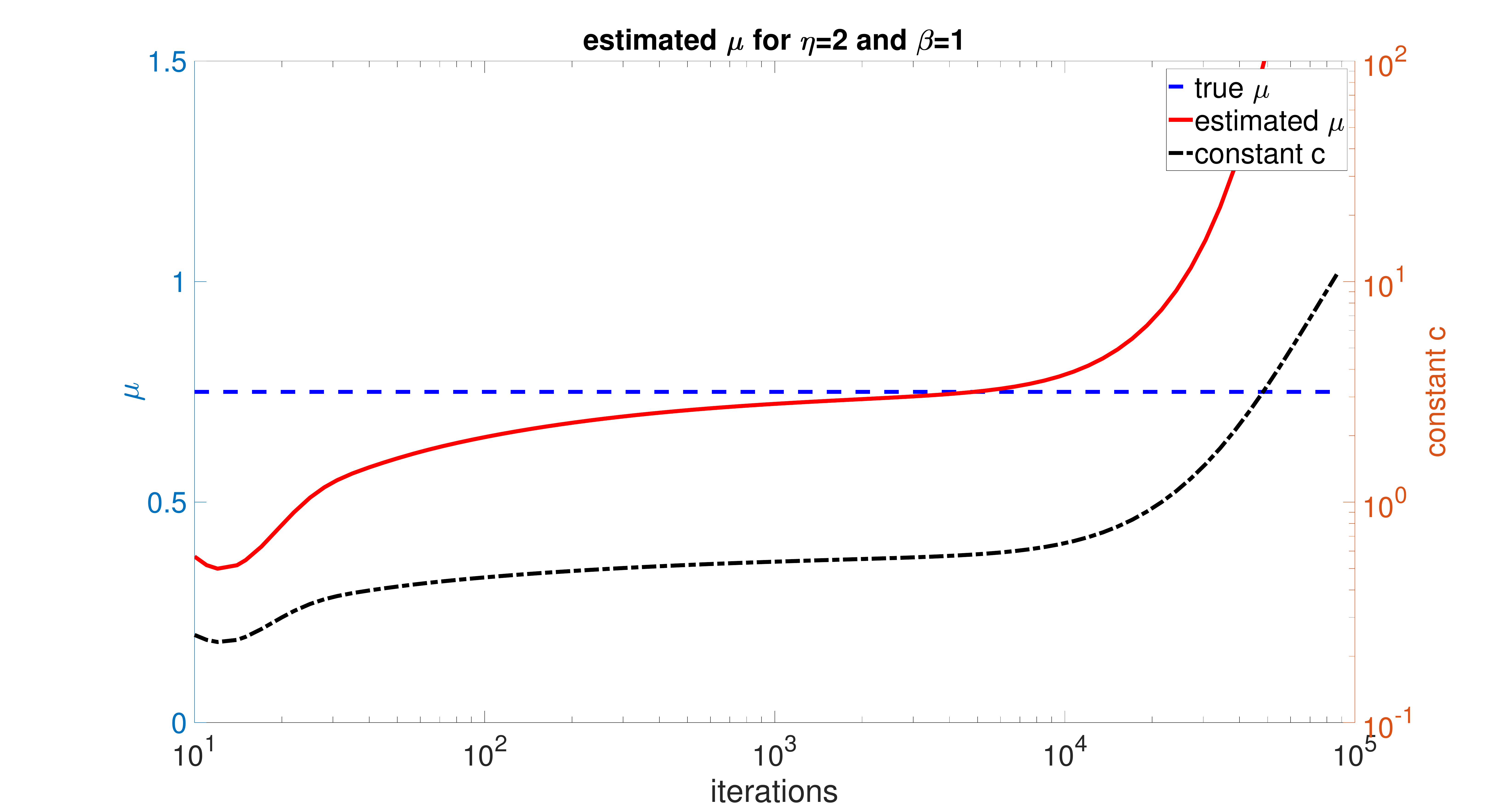}\caption{Demonstration of the method for $\eta=2$ and $\beta=1$. Dashed: true $\mu=0.75$, solid: estimated $\mu$, dash-dotted: estimated $c$; plotted over the iterations. From the stable section we estimate $\mu\approx0.7$. For higher iterations the estimate for $\mu$ goes up because our discretization level is insufficient.}\label{fig:diag3}
\end{figure}

\begin{figure}
\includegraphics[width=\textwidth]{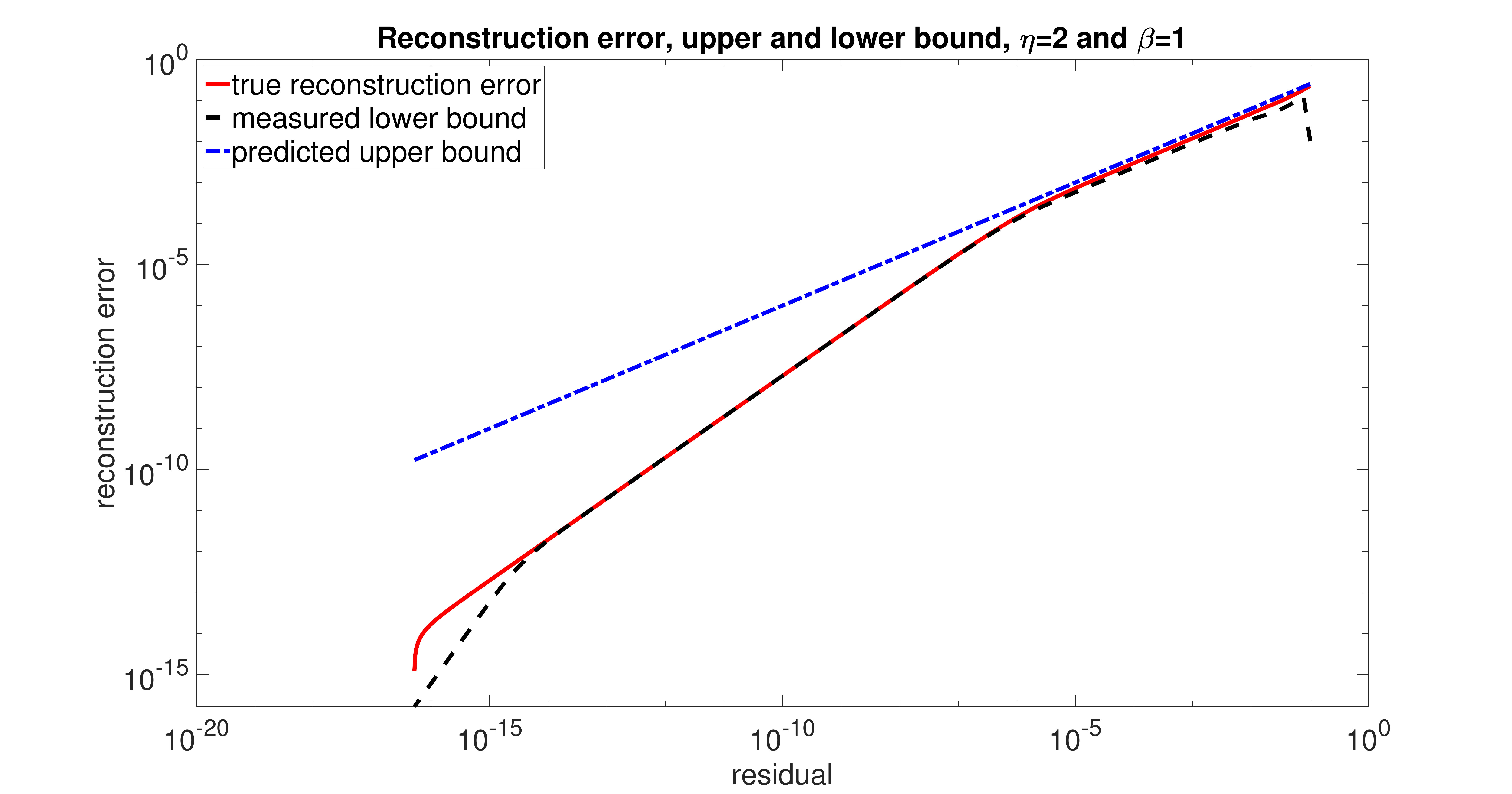}\caption{Reconstruction error for $\eta=2$ and $\beta=1$. Solid: measured error, dash-dotted: upper bound from the source condition, dashed: observed lower bound. The reconstruction error has two slopes. For larger residuals it corresponds to the correct $\mu$, for smaller residual the slope is approximately one due to insufficient discretization. }\label{fig:diag3LB}
\end{figure}

\begin{figure}
\includegraphics[width=\textwidth]{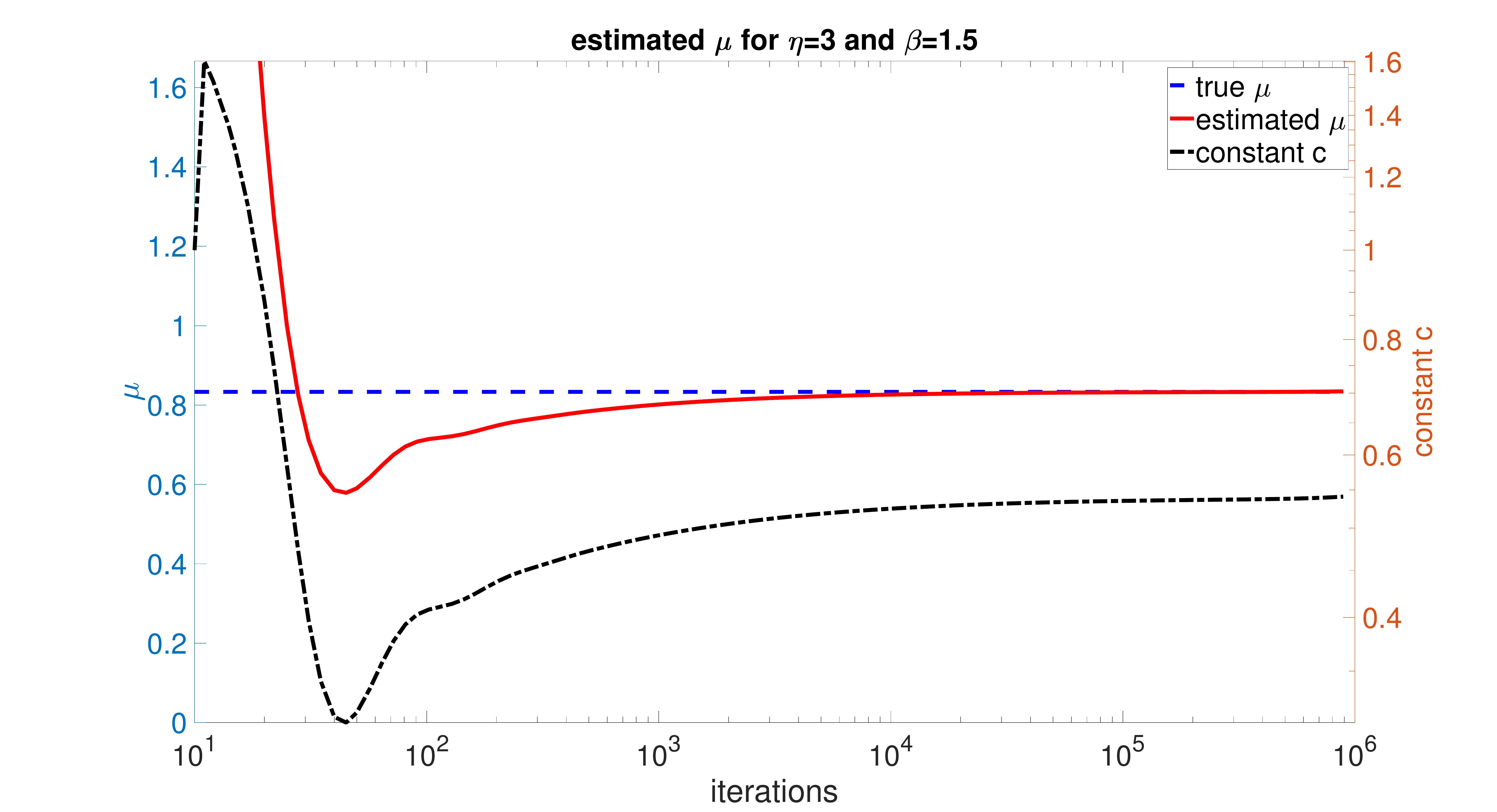}\caption{Demonstration of the method for $\eta=3$ and $\beta=1.5$. Dashed: true $\mu=0.833$, solid: estimated $\mu$, dash-dotted: estimated $c$; plotted over the iterations. From the stable section we estimate $\mu\approx 0.8$.}\label{fig:diag4}
\end{figure}

\begin{figure}
\includegraphics[width=\textwidth]{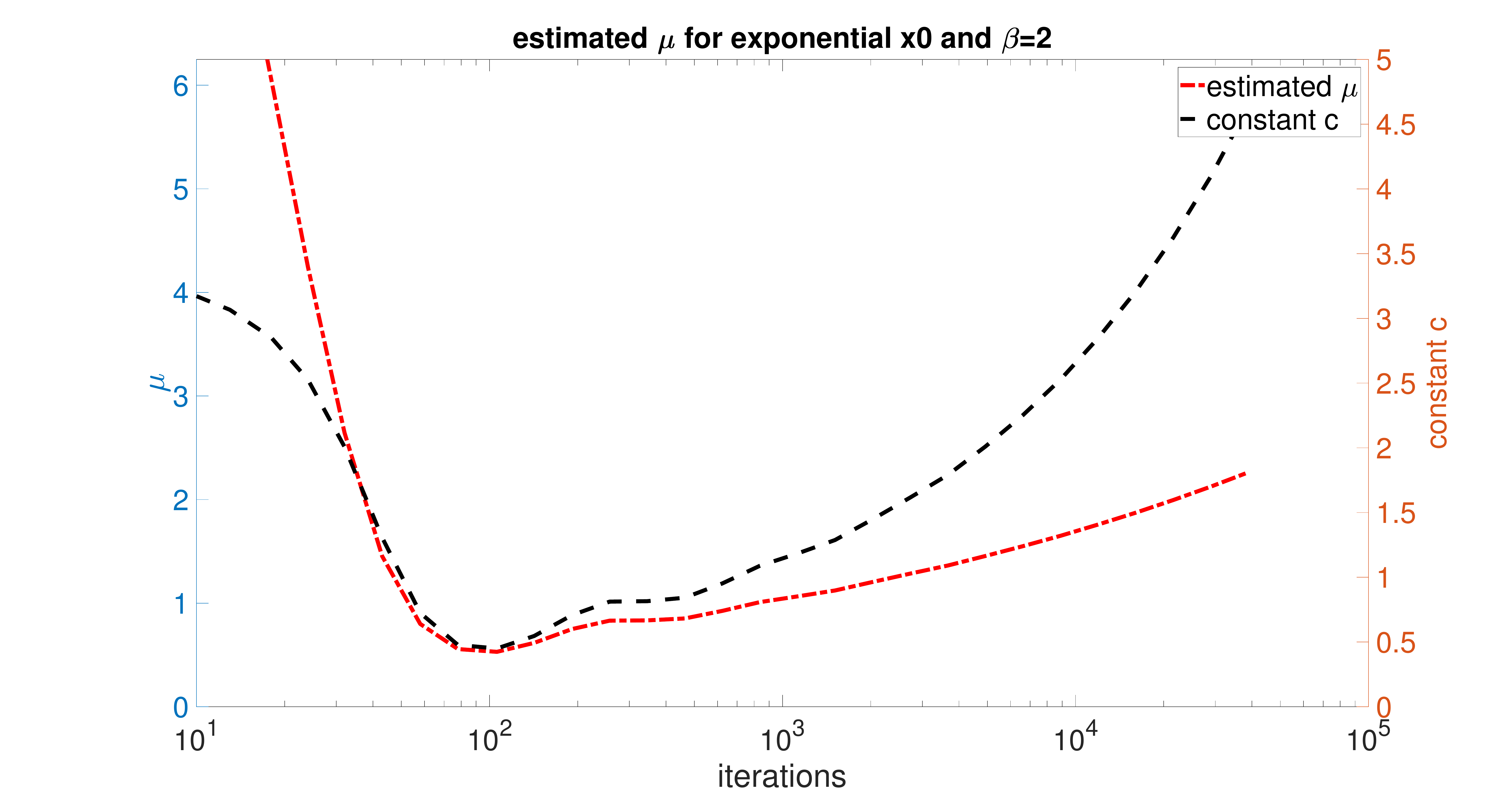}\caption{Demonstration of the method for $x_i=e^{-i}$ and $\beta=1.5$. Solid: estimated $\mu$, dash-dotted: estimated $c$; plotted over the iterations. Since both $\mu$ and $c$ are unstable we conclude that \eqref{eq:sc} is violated or not strictly fulfilled.}\label{fig:expon}
\end{figure}

\begin{figure}
\includegraphics[width=\textwidth]{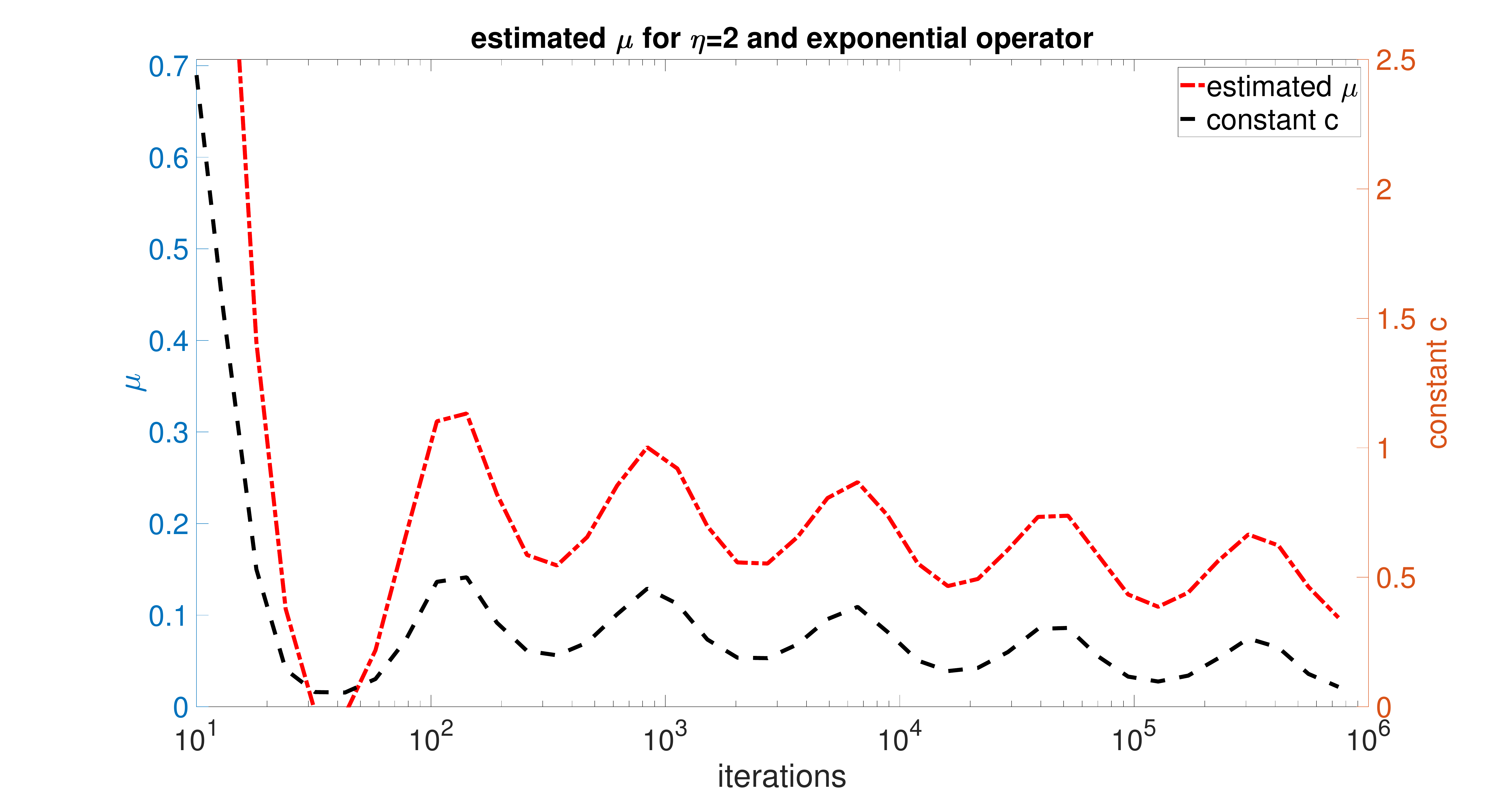}\caption{Demonstration of the method for $\eta=2$ and $\sigma_i=e^{-i}$. Solid: estimated $\mu$, dash-dotted: estimated $c$; plotted over the iterations. Since both $\mu$ and $c$ are unstable we conclude that \eqref{eq:sc} is violated.}\label{fig:expon_op}
\end{figure}

\begin{figure}
\includegraphics[width=\textwidth]{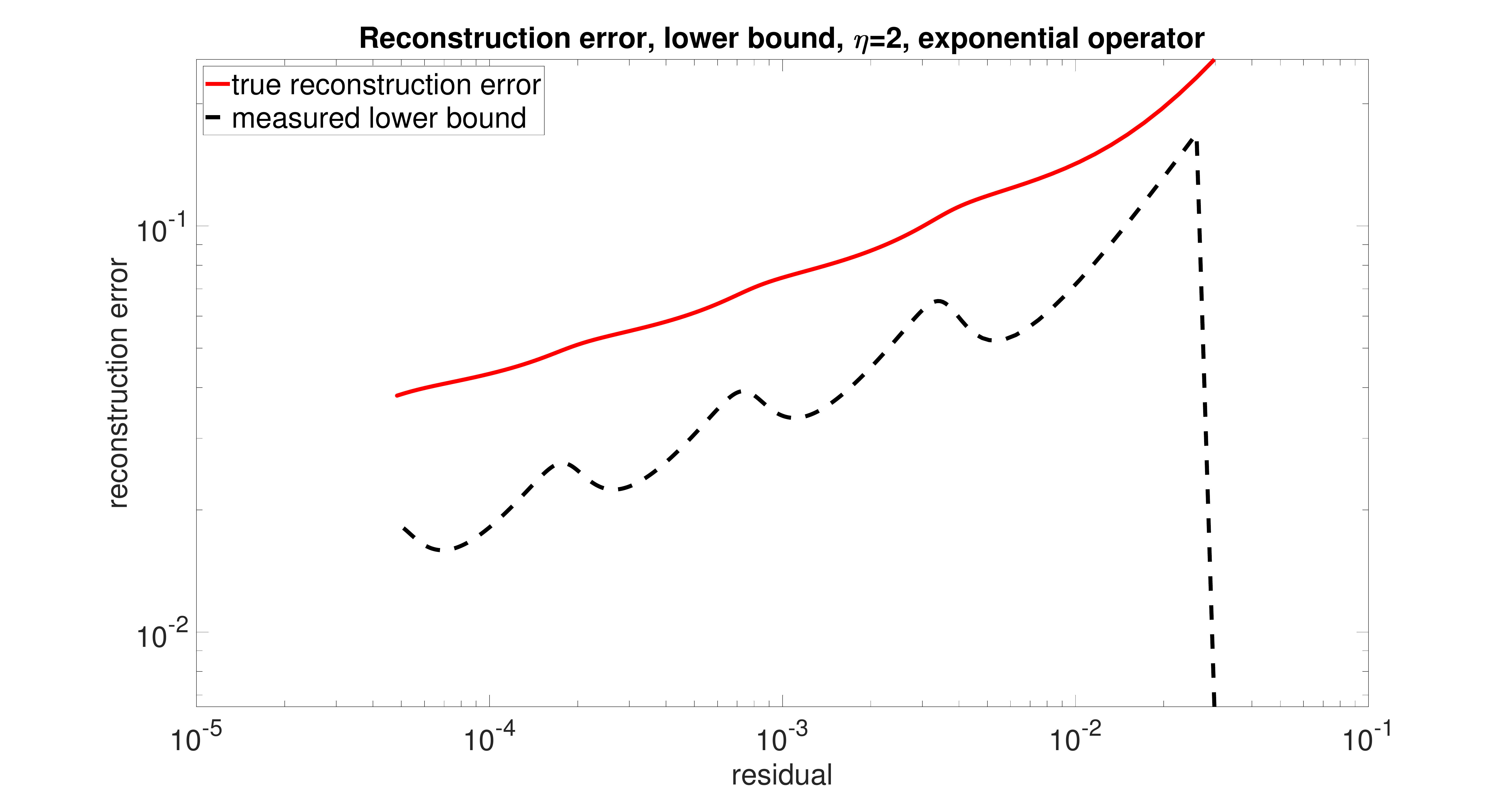}\caption{Reconstruction error for $\eta=2$ and $\sigma_i=e^{-i}$. Solid: measured error,  dashed: observed lower bound. The wildly oscillating lower bound suggests that the source condition \eqref{eq:sc2} does not hold.}\label{fig:expon_opLB}
\end{figure}

\subsection{Examples from Regularization Tools}
In the previous section we saw that the estimation technique yields reasonable results for the ``easy'' diagonal problem. Now we move to more realistic examples. To this end we use the \textit{Regularization Tools} toolbox \cite{regtools} in MATLAB. This toolbox comes with 14 problems of type \eqref{eq:problem}. In order to not be spoiled with background information we first apply our method to each of the problems. The algorithm yields seemingly good results for the problems \textit{deriv2} (see Figure \ref{fig:RT1}), \textit{gravity} (Figure \ref{fig:RT2}) and \textit{tomo}; see Figure \ref{fig:RT3}. At this point we remark that the routine setting up the \textit{tomo} data has a stochastic component, so all our experiments on the \textit{tomo} problem have been conducted with the same data set that we stored. Since we don't know $\mu$ exactly anymore, we must look for other ways of cross-checking it. We therefore try to verify the implication of Proposition \ref{thm:tikh}. Namely, if the estimate for $\mu$ is correct, the a priori parameter choice \eqref{eq:apriori} should yield the convergence rate \eqref{eq:rate_tikh}. In \eqref{eq:apriori} we simply set $c_1=c_2=1$. Since we are only interested in the exponent of the convergence rate this does not affect the result. We create noisy data contaminated with Gaussian error such that the relative error is between $10\%$ and $0.1\%$. Since we know the exact solution we can compute the reconstruction errors and a linear regression yields the observed convergence rate. The results are given in Table \ref{tab:res}. We achieve good accordance between the observed and the predicted convergence rate \eqref{eq:rate_tikh} for the \textit{deriv2} and \textit{tomo} problems. For \textit{gravity} we observe a mismatch between those rates. We therefore compute the singular value decomposition of all problems in the \textit{Regularization Tools} and look for the largest $\mu$ that still satisfies \eqref{eq:verif_smoothness}. It turns out that only the problems \textit{tomo} and \textit{deriv2} yield a reasonable result with $\mu \approx 0.1$ and $\mu\approx 0.2$, respectively. In particular we did not find a $\mu$ for the gravity problem. Looking again at Figure \ref{fig:RT2} and comparing it to Figure \ref{fig:expon_op} we see that in both Figures $\mu$ and $c$ show sinusoidal graphs. This may hint at an exponentially ill-posed operator in the gravity problem. 

In summary, we conclude that or method works and detects the two problems out of the Regularization Tools for which a source condition \eqref{eq:sc} seems to hold.

\begin{figure}
\includegraphics[width=\textwidth]{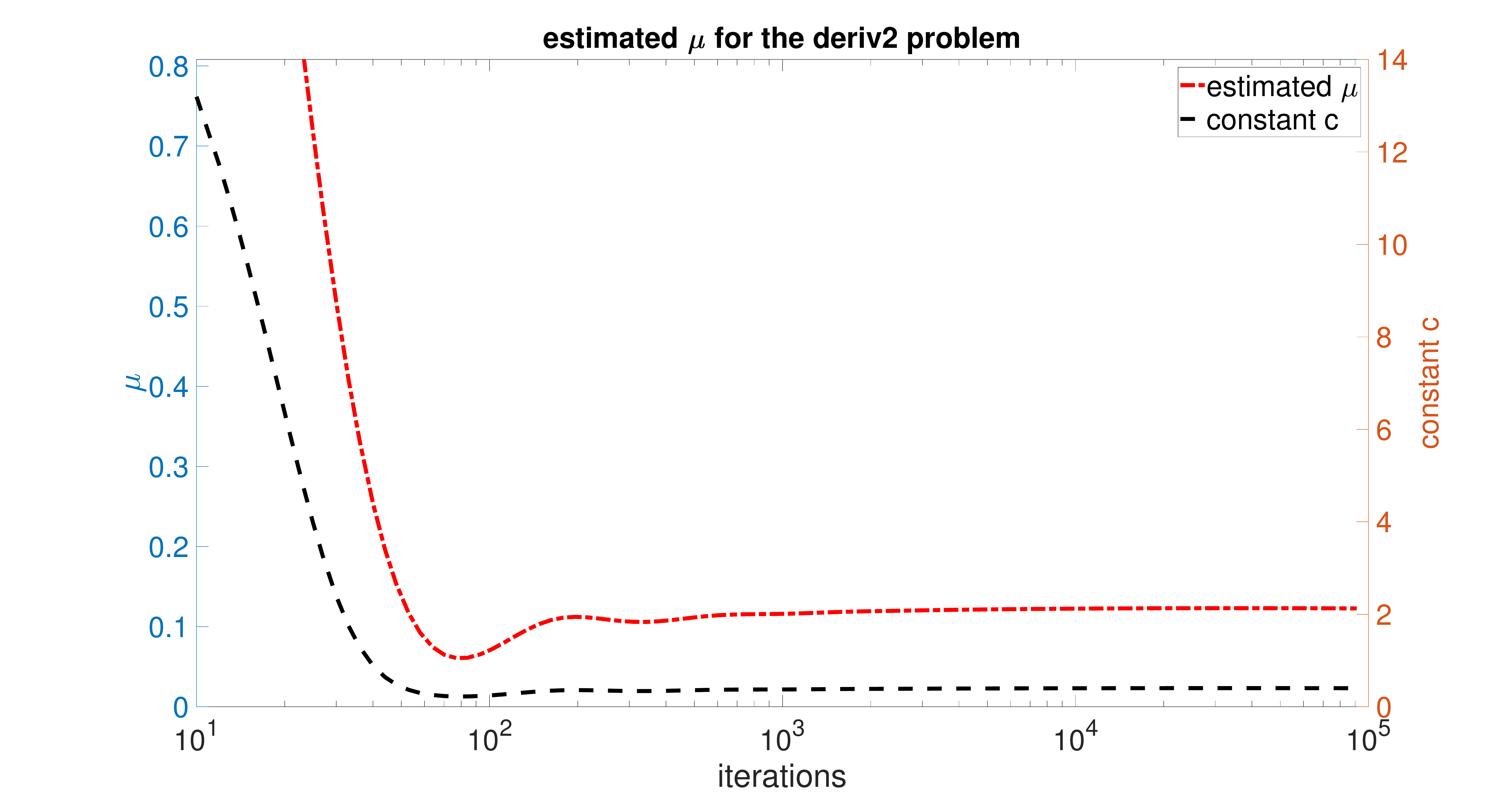}\caption{Demonstration of the method for the problem \textit{deriv2}. Solid: estimated $\mu$, dotted: estimated $c$; plotted over the iterations. From the stable section we estimate $\mu\approx 0.13$.}\label{fig:RT1}
\end{figure}

\begin{figure}
\includegraphics[width=\textwidth]{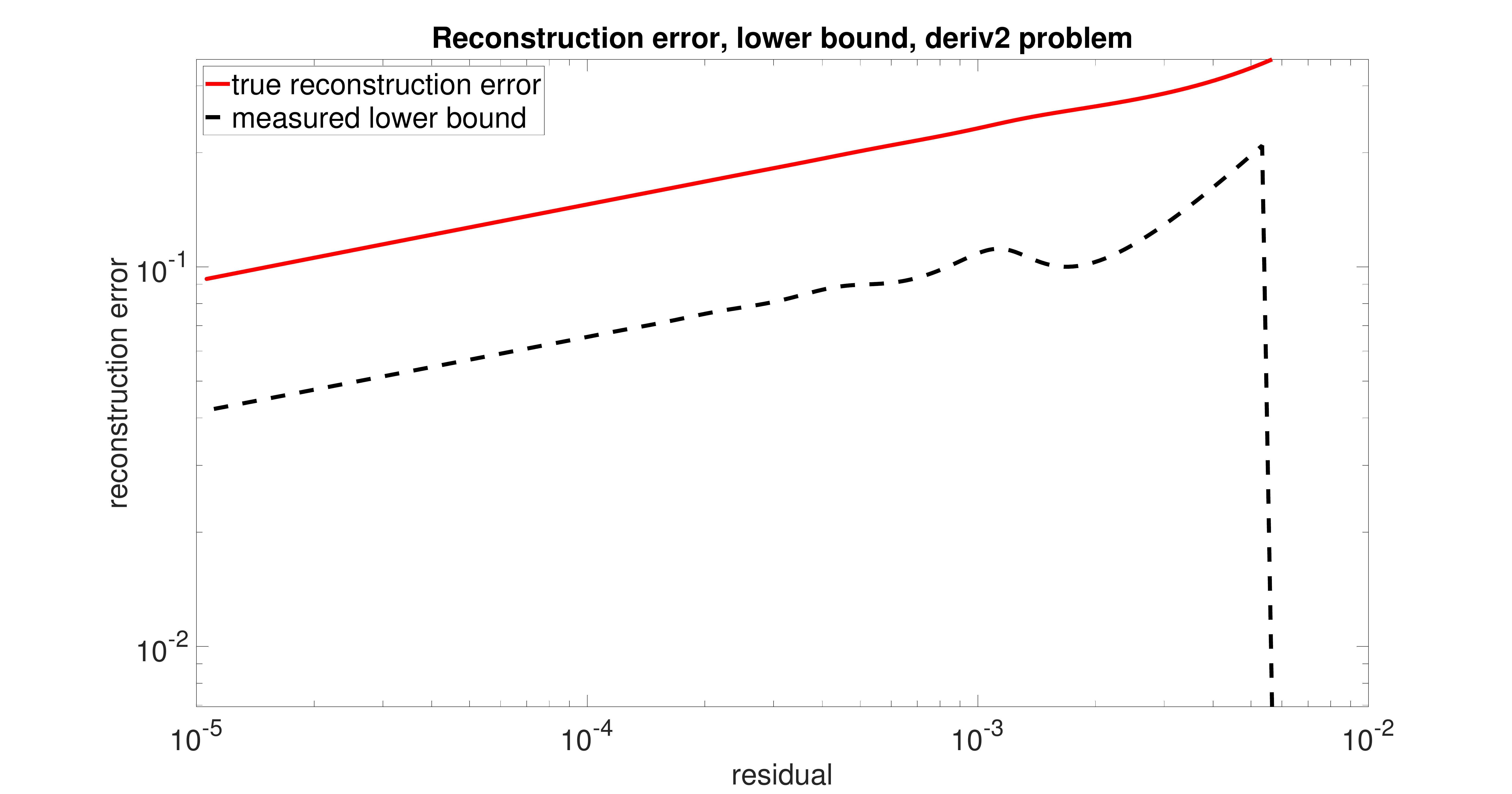}\caption{Reconstruction error for the problem \textit{deriv2}. Solid: measured error,  dashed: observed lower bound. The part with constant slope corresponds to a good estimation of $\mu$ in Figure \ref{fig:RT1}.}\label{fig:RT1LB}
\end{figure}

\begin{figure}
\includegraphics[width=\textwidth]{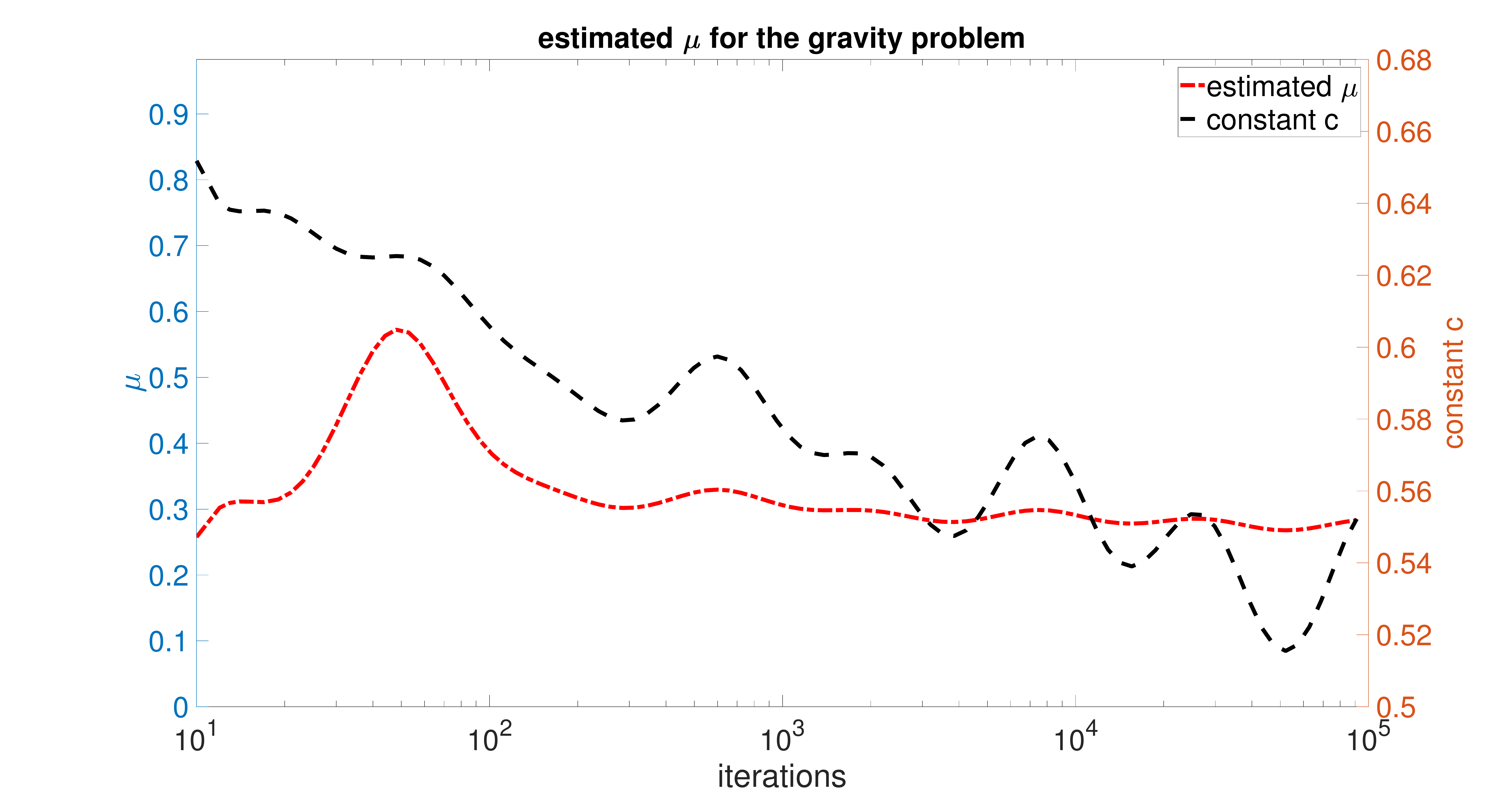}\caption{Demonstration of the method for the problem \textit{gravity}. Solid: estimated $\mu$, dotted: estimated $c$; plotted over the iterations. From the stable section we estimate $\mu\approx 0.3$.}\label{fig:RT2}
\end{figure}

\begin{figure}
\includegraphics[width=\textwidth]{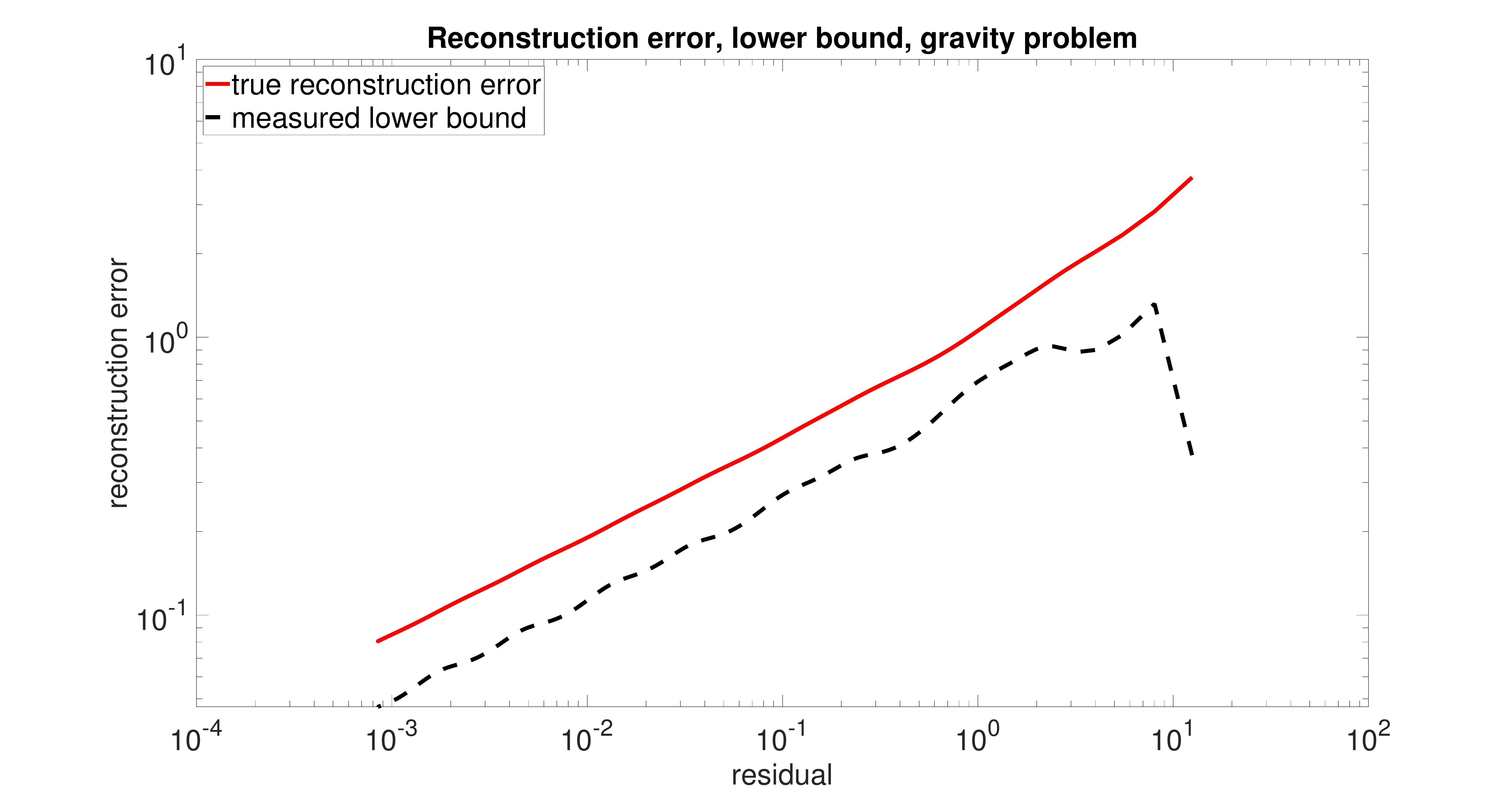}\caption{Reconstruction error for the problem \textit{gravity}. Solid: measured error,  dashed: observed lower bound. The lower bound is oscillating similar as in Figure \ref{fig:expon_opLB}, albeit with smaller amplitude.}\label{fig:RT2LB}
\end{figure}

\begin{figure}
\includegraphics[width=\textwidth]{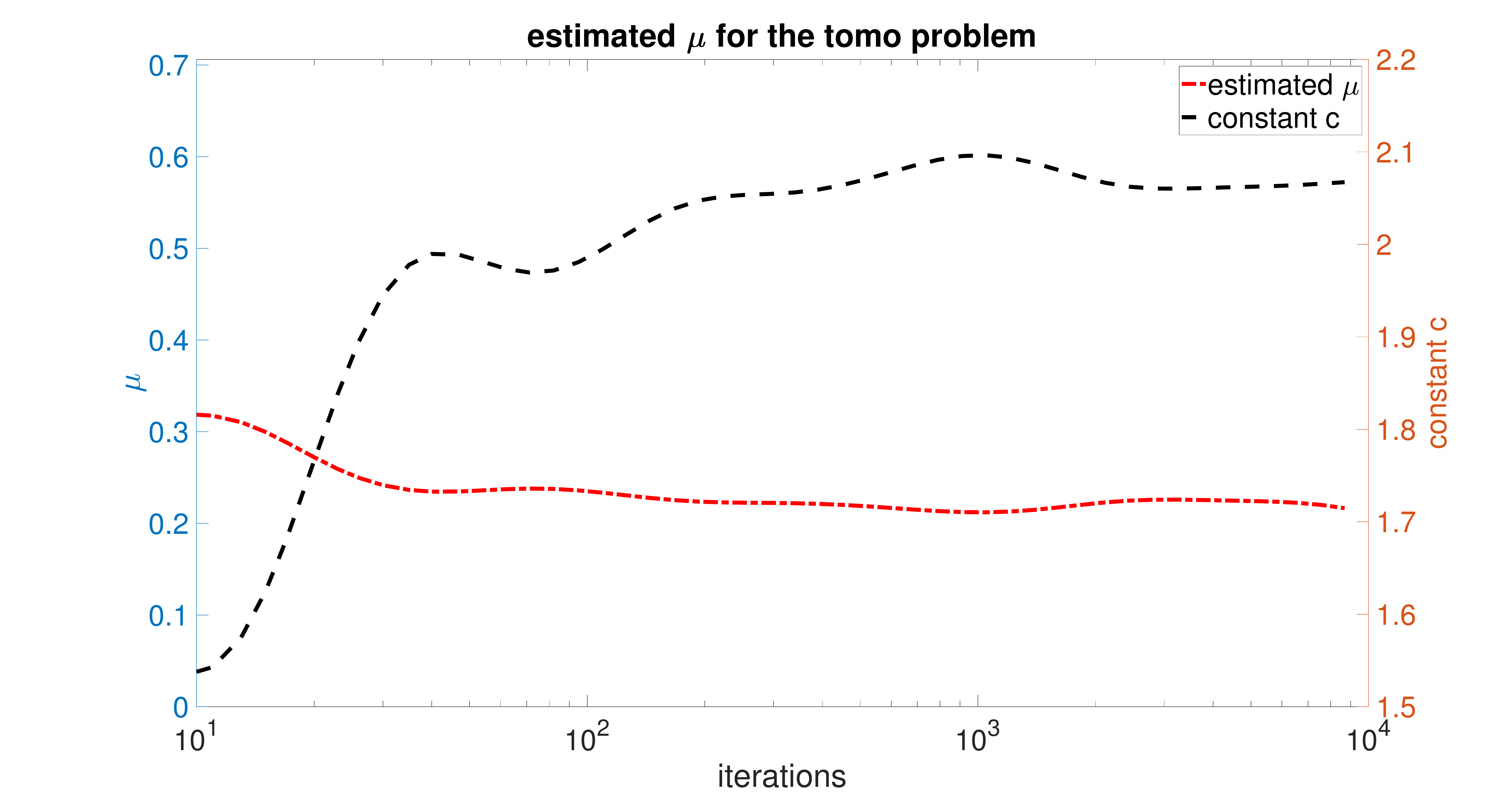}\caption{Demonstration of the method for the problem \textit{tomo}. Solid: estimated $\mu$, dotted: estimated $c$; plotted over the iterations. From the stable section we estimate $\mu\approx 0.2$.}\label{fig:RT3}
\end{figure}

\begin{figure}
\includegraphics[width=\textwidth]{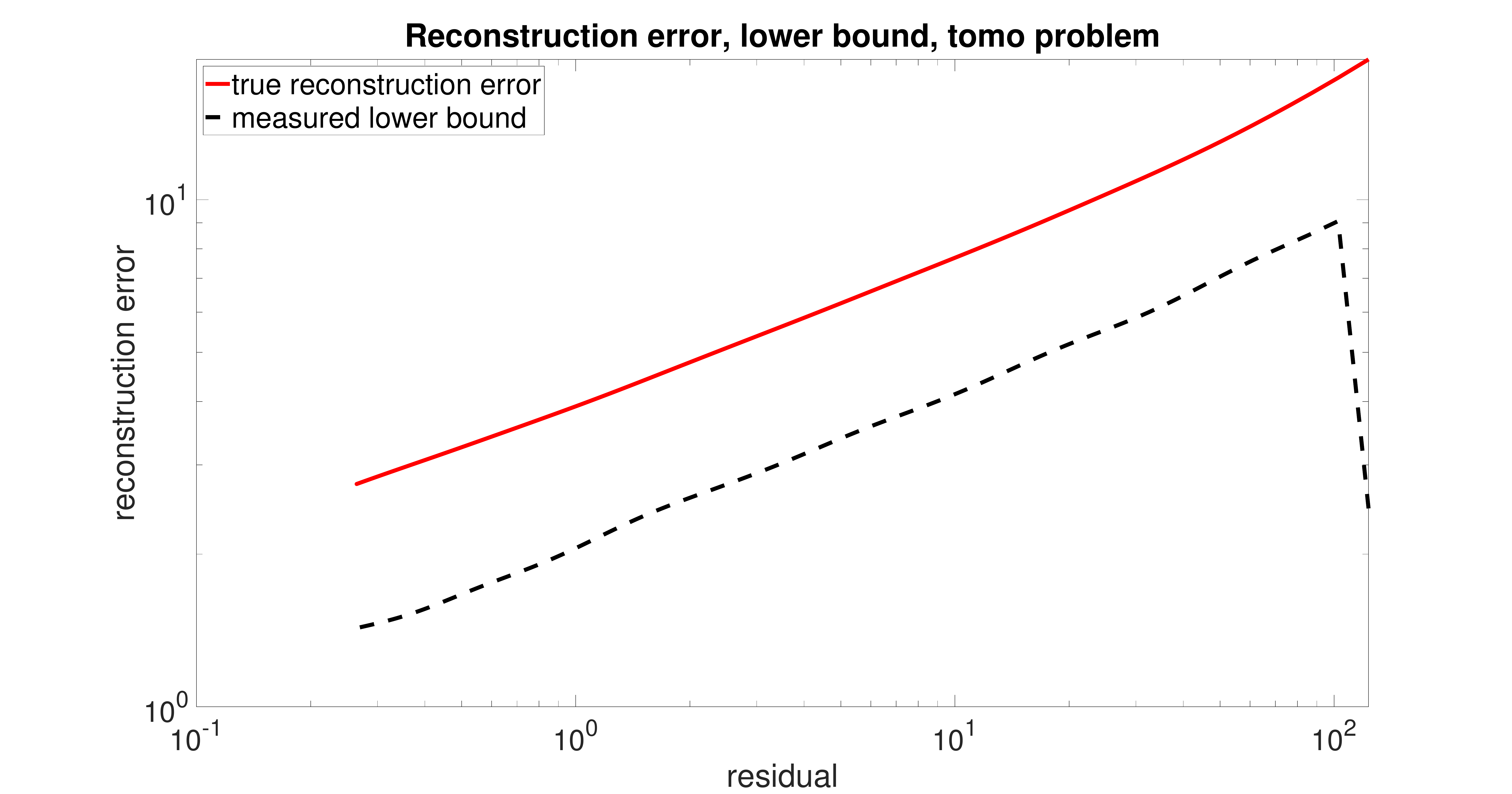}\caption{Reconstruction error for the problem \textit{tomo}. Solid: measured error,  dashed: observed lower bound. }\label{fig:RT3LB}
\end{figure}

\begin{table}\centering \caption{Tikhonov-test for the problems \textit{deriv2}, \textit{tomo}, and \textit{gravity}. For the first two problems the predicted and observed convergence rates are close enough to suggest that $\mu$ is estimated reasonably well. The \textit{gravity} problem yields a misfit since it does not fulfill \eqref{eq:verif_smoothness} for any $\mu>0$.}
\begin{tabular}{c c c c}
problem & estimated $\mu$ & predicted rate & observed rate \\
\textit{deriv2} & $0.13$ & $\delta^{0.206}$ & $\delta^{0.193}$ \\
\textit{tomo} & $0.2$ & $\delta^{0.285}$ & $\delta^{0.3}$ \\
\textit{gravity} & $0.3$ & $\delta^{0.375}$ & $\delta^{0.527}$
\end{tabular}\label{tab:res}
\end{table}

\subsection{Noisy data}
So far our experiments for the estimation of $\mu$ involved only noise-free data. In practice of course noise is inevitable. We repeat the tests of Section \ref{sec:diag} for noisy data. Since the results are similar we will only discuss the case $\eta=2$ and $\beta=2$ as an example. We show the results for $1\%$ and $0.1\%$ relative noise in Figure \ref{fig:noise1} and Figure \ref{fig:noise2}, respectively. We observe that in both cases there is a part where $c$ and $\mu$ are approximately stable. The estimate for $\mu$ from this section is slightly smaller than the true value. As the number of iterations grows, the estimated $\mu$ drops even below zero while at the same point $c$ increases strongly. If we increase the noise level further we obtain a smaller and smaller region for $\mu$. Nevertheless, at least for smaller noise levels the method still works. The pattern also holds for the examples from Regularization Tools. We provide a test of the \textit{deriv2}-problem with $0.1\% $ relative noise in Figure \ref{fig:noise_deriv}. For the \textit{tomo} problem, we still obtain a reasonable estimate for $1\%$ noise, see Figure \ref{fig:noise_tomo}. As with all noisy data sets, the estimate for $\mu$ starts to decrease rapidly after some point. Looking at the lower bound, e.g. in Figure \ref{fig:noise_tomo_LB} for the \textit{tomo} problem, we see that for small er residuals the lower bound starts to increase with smaller residuals. We conclude that then the noise dominates the KL inequality and the estimates obtained for $\mu$ in the corresponding later iterations of the Landweber method can be disregarded.

\subsection{Real measurements}
Motivated by this, we now turn to the last test case where we apply our algorithm to real data. This means we have an unknown amount of noise and no exact solution to repeat the Tikhonov-experiment. We use the tomographic X-ray data of a carved cheese, a lotus root, and of a walnut which are freely available at \url{http://www.fips.fi/dataset.php}; see also the documentations \cite{cheese},\cite{lotus}, and \cite{nuts}. We use the datasets \textit{DataFull128x15.mat}, \textit{LotusData128.mat}, and \textit{Data82.mat}, respectively. Since the matrices $A$ are far too large for a full SVD we can not verify \eqref{eq:verif_smoothness} and we have to rely solely on our new approach with the Landweber method which is still easily computable. The results are shown in Figures \ref{fig:cheese}, \ref{fig:lotus}, and \ref{fig:nuts}. The three results look similar but vary in detail. In all figures we have a maximum of $\mu$ after around 25 iterations. After that, $\mu$ decreases while the constant $c$ increases. This is likely due to the noise which takes over at some point see the explanation at the end of the last section. While in particular the lotus problem yields an almost stable section for $\mu$, the walnut hardly yields a trustworthy region. This suggests that the solution of the walnut problem is furthest from fulfilling the source condition \eqref{eq:sc}. This can also be concluded from Figure \ref{fig:real_lower}, where we plot the measured lower bounds from \eqref{eq:lb}.

\begin{figure}
\includegraphics[width=\textwidth]{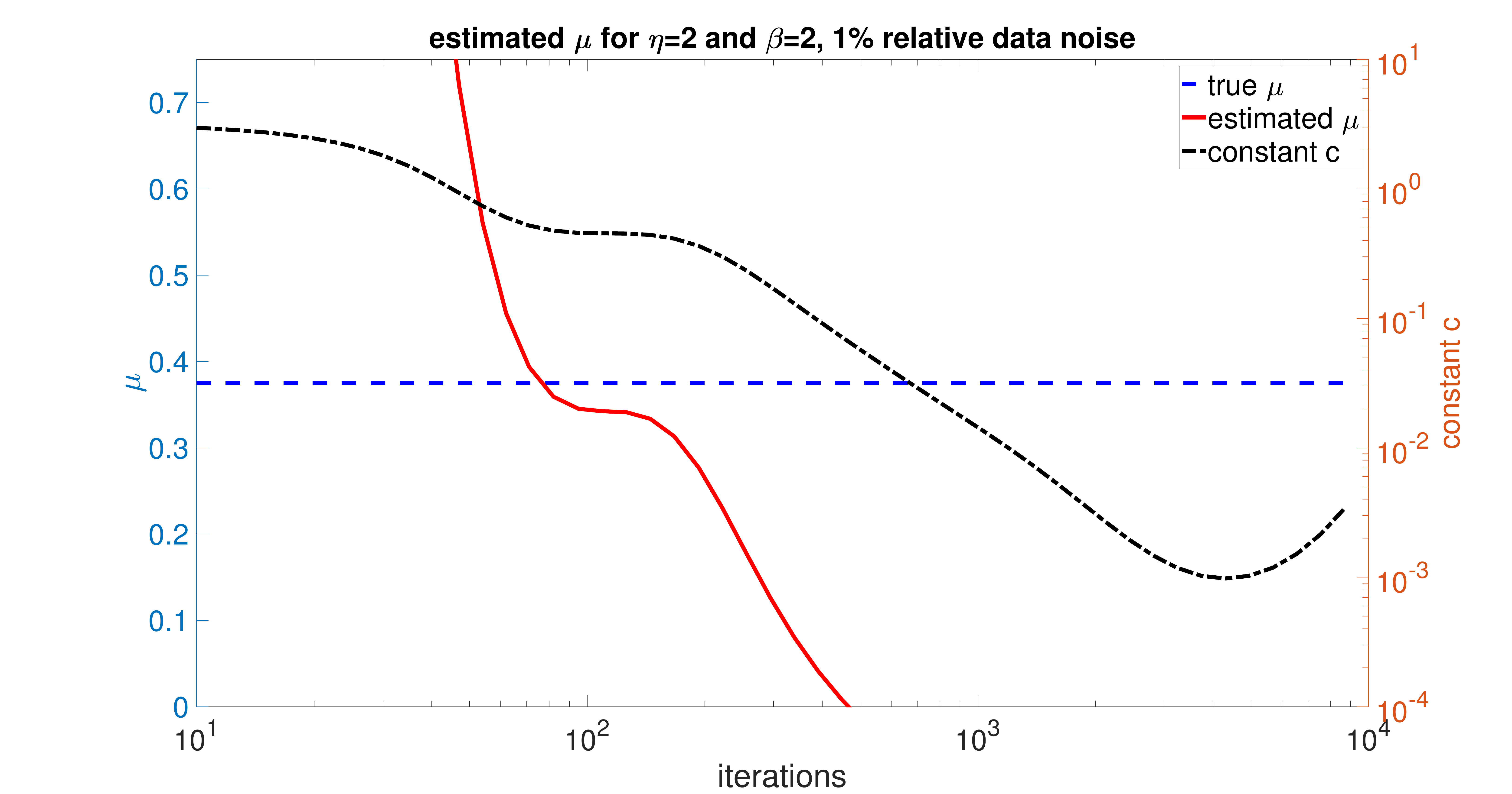}\caption{Demonstration of the method for $\eta=2$ and $\beta=2$ with $1\%$ relative data noise. Solid: true $\mu=0.375$, dashed: estimated $\mu$, dotted: estimated $c$; plotted over the iterations. In the approximately stable region we estimate $\mu\approx 0.35$.}\label{fig:noise1}
\end{figure}

\begin{figure}
\includegraphics[width=\textwidth]{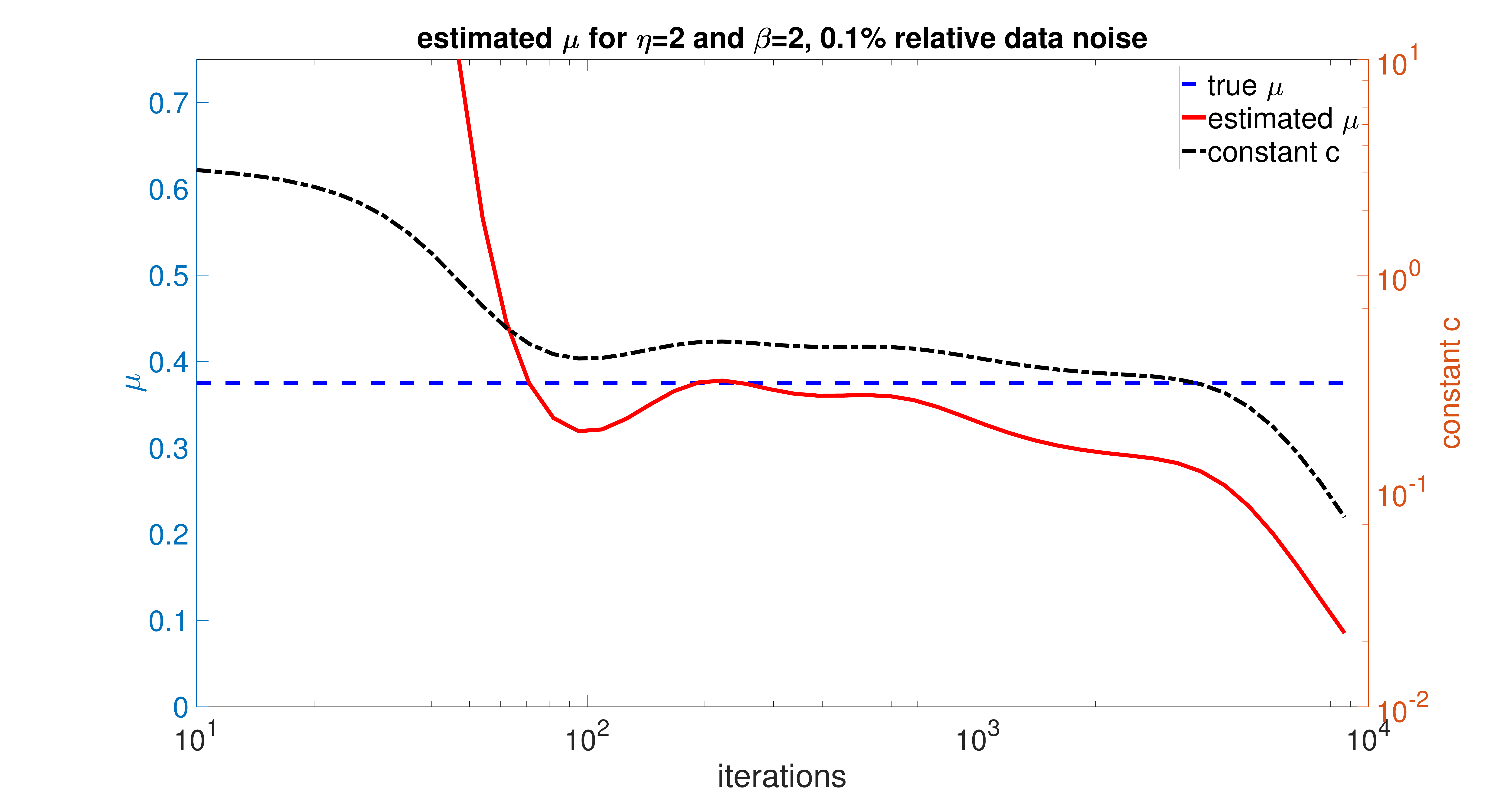}\caption{Demonstration of the method for $\eta=2$ and $\beta=2$ with $0.1\%$ relative data noise. Solid: true $\mu=0.375$, dashed: estimated $\mu$, dotted: estimated $c$; plotted over the iterations. In the approximately stable region we estimate $\mu\approx 0.4$.}\label{fig:noise2}
\end{figure}
\begin{figure}
\includegraphics[width=\textwidth]{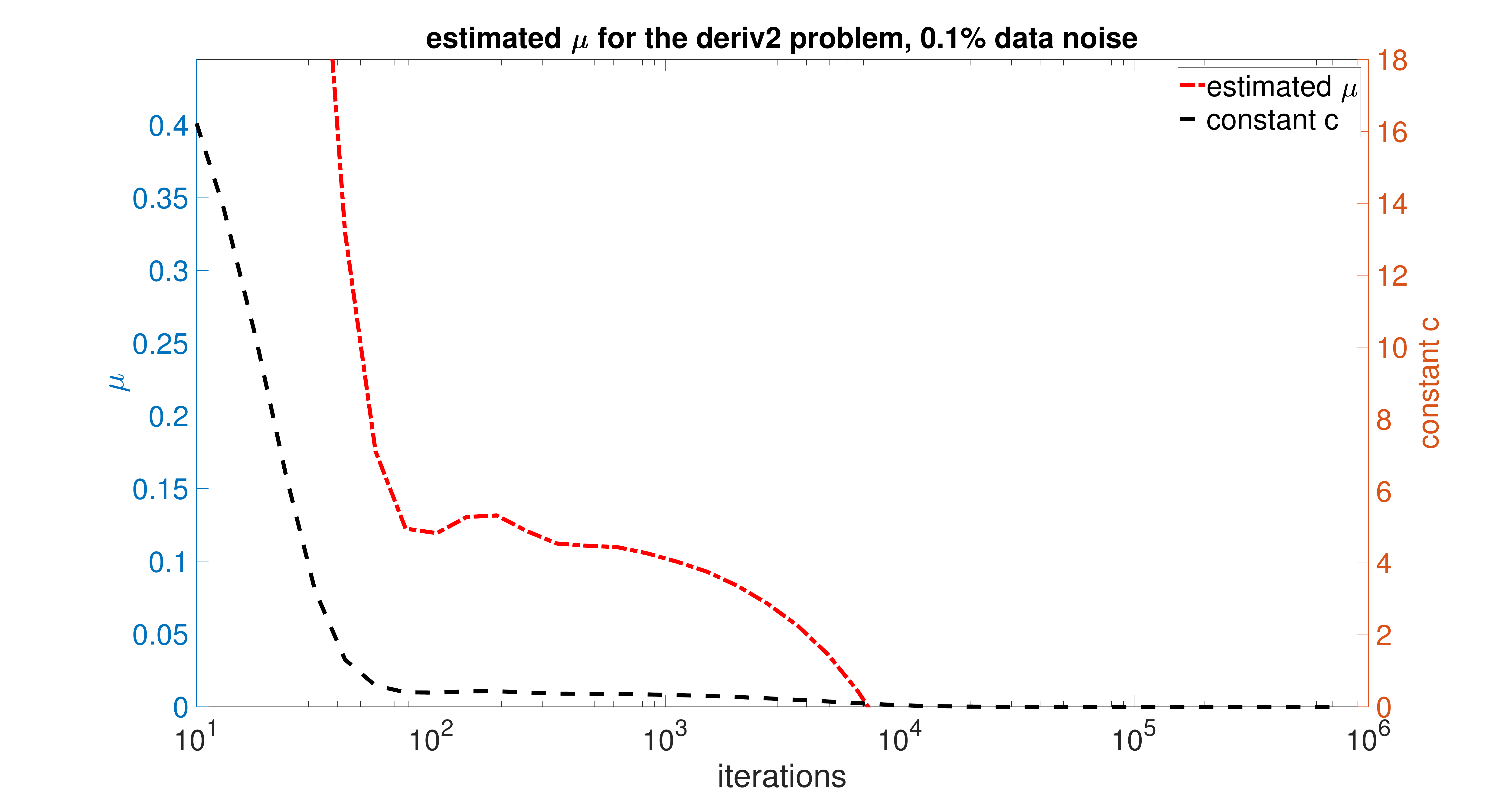}\caption{Demonstration of the method for the problem \textit{deriv2} with $0.1\%$ relative data noise. Dashed: estimated $\mu$, dotted: estimated $c$; plotted over the iterations. In the approximately stable region we may take $\mu\approx 0.13$.}\label{fig:noise_deriv}
\end{figure}

\begin{figure}
\includegraphics[width=\textwidth]{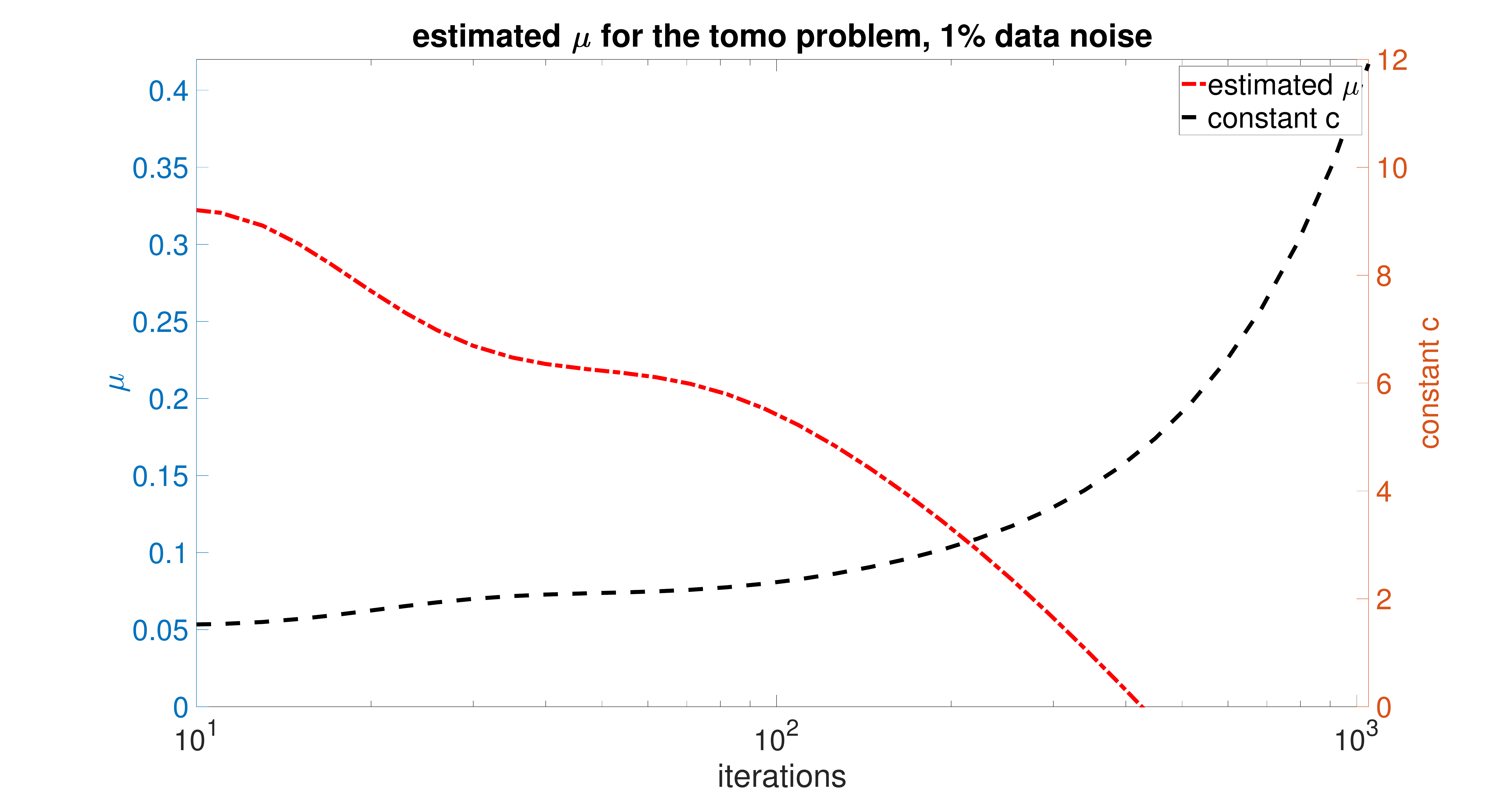}\caption{Demonstration of the method for the problem \textit{tomo} with $1\%$ relative data noise. Dashed: estimated $\mu$, dotted: estimated $c$; plotted over the iterations. In the saddlepoint-like region we may take $\mu\approx 0.22$. Afterwards $\mu$ decreases because the noise becomes dominating, see Figure \ref{fig:noise_tomo_LB}.}\label{fig:noise_tomo}
\end{figure}

\begin{figure}
\includegraphics[width=\textwidth]{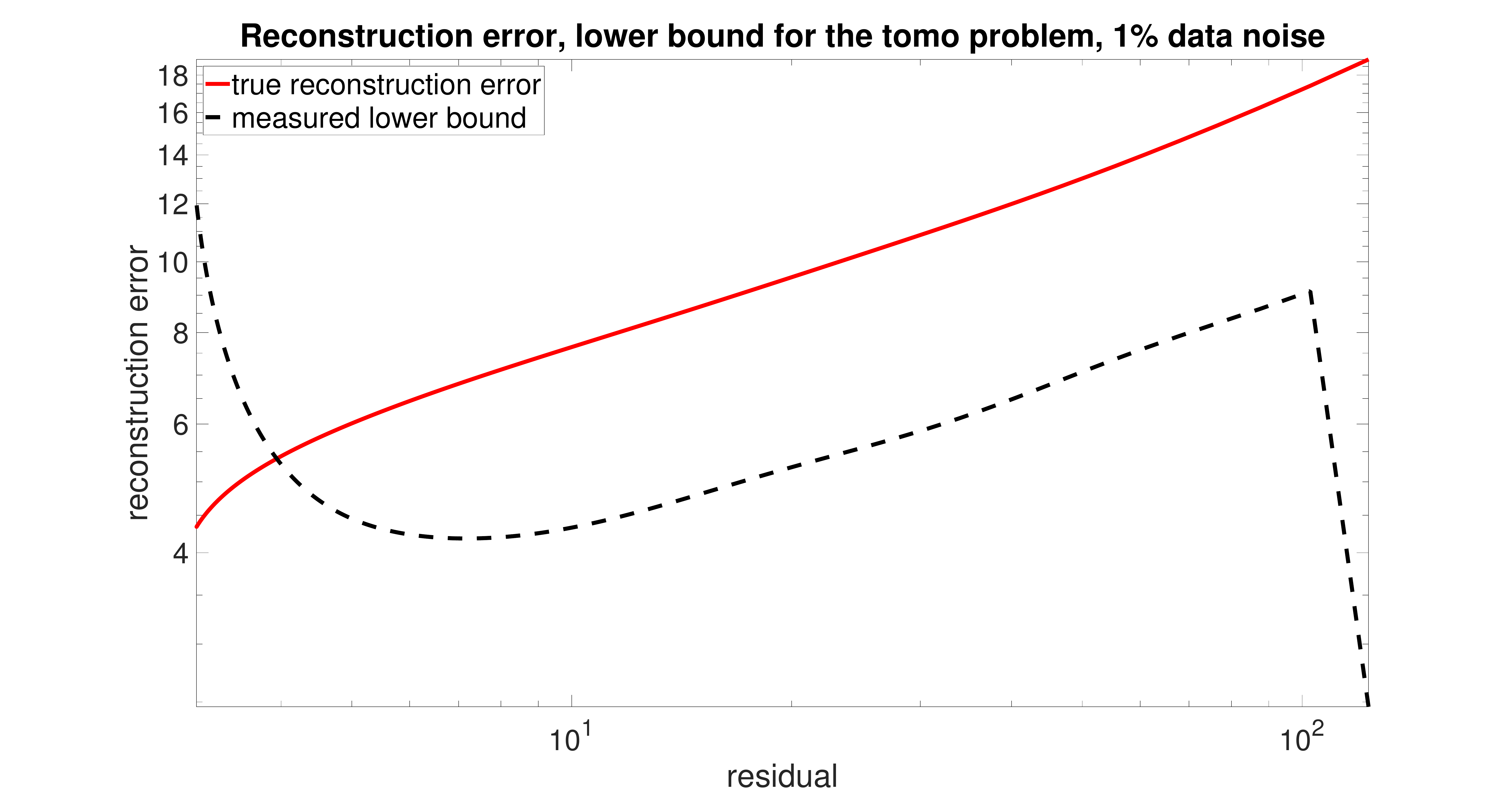}\caption{Reconstruction error for the problem \textit{tomo} with $1\%$ relative data noise. Solid: measured error,  dashed: observed lower bound. The lower bound increasing for lower residuals means that the noise becomes dominant, hence $\mu$ is not estimated correctly in in those regions.}\label{fig:noise_tomo_LB}
\end{figure}

\begin{figure}
\includegraphics[width=\textwidth]{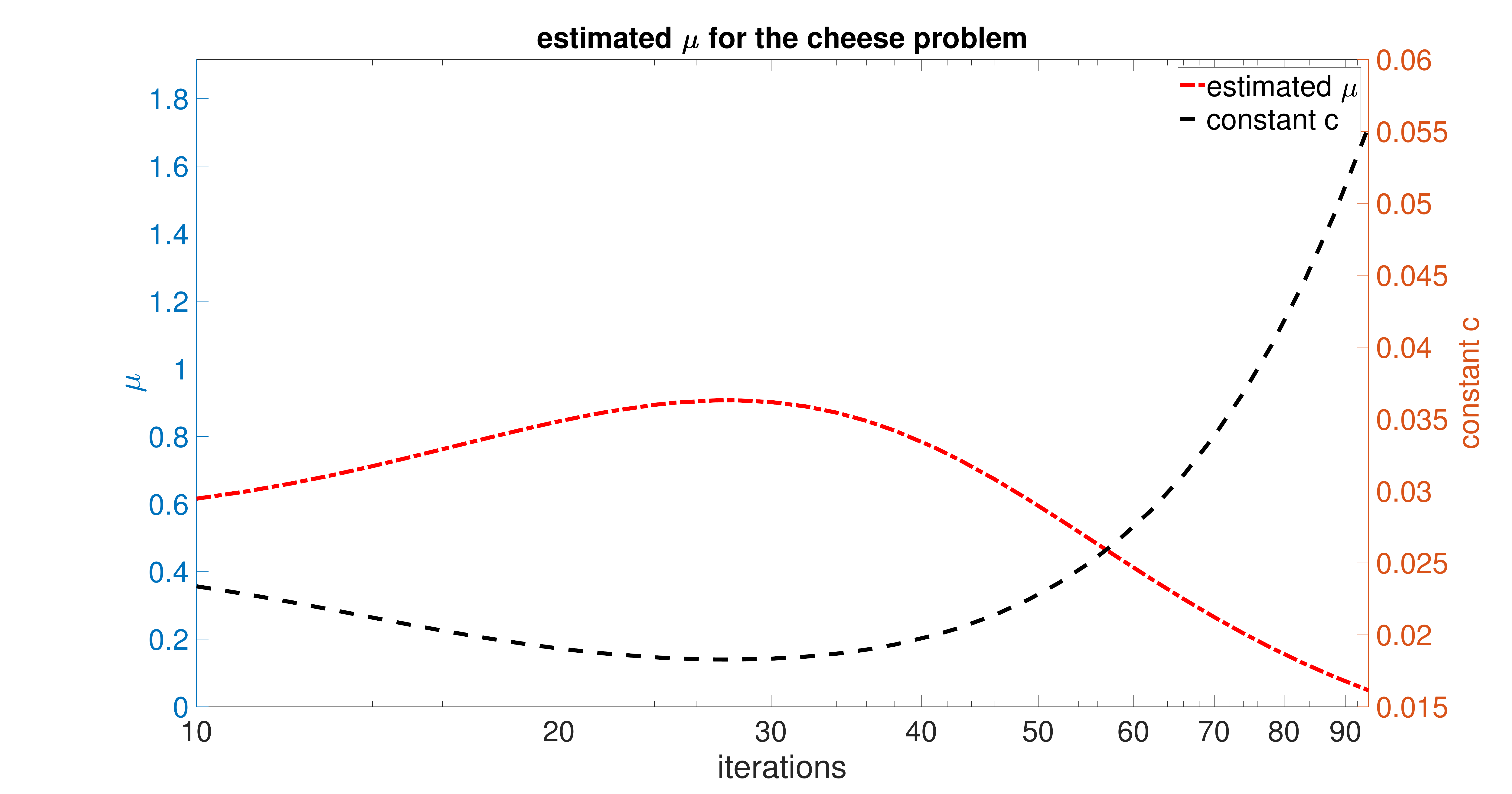}\caption{Demonstration of the method for the cheese problem. We estimate $\mu\approx 0.9$, but have no confirmation for this other than our method.}\label{fig:cheese}
\end{figure}

\begin{figure}
\includegraphics[width=\textwidth]{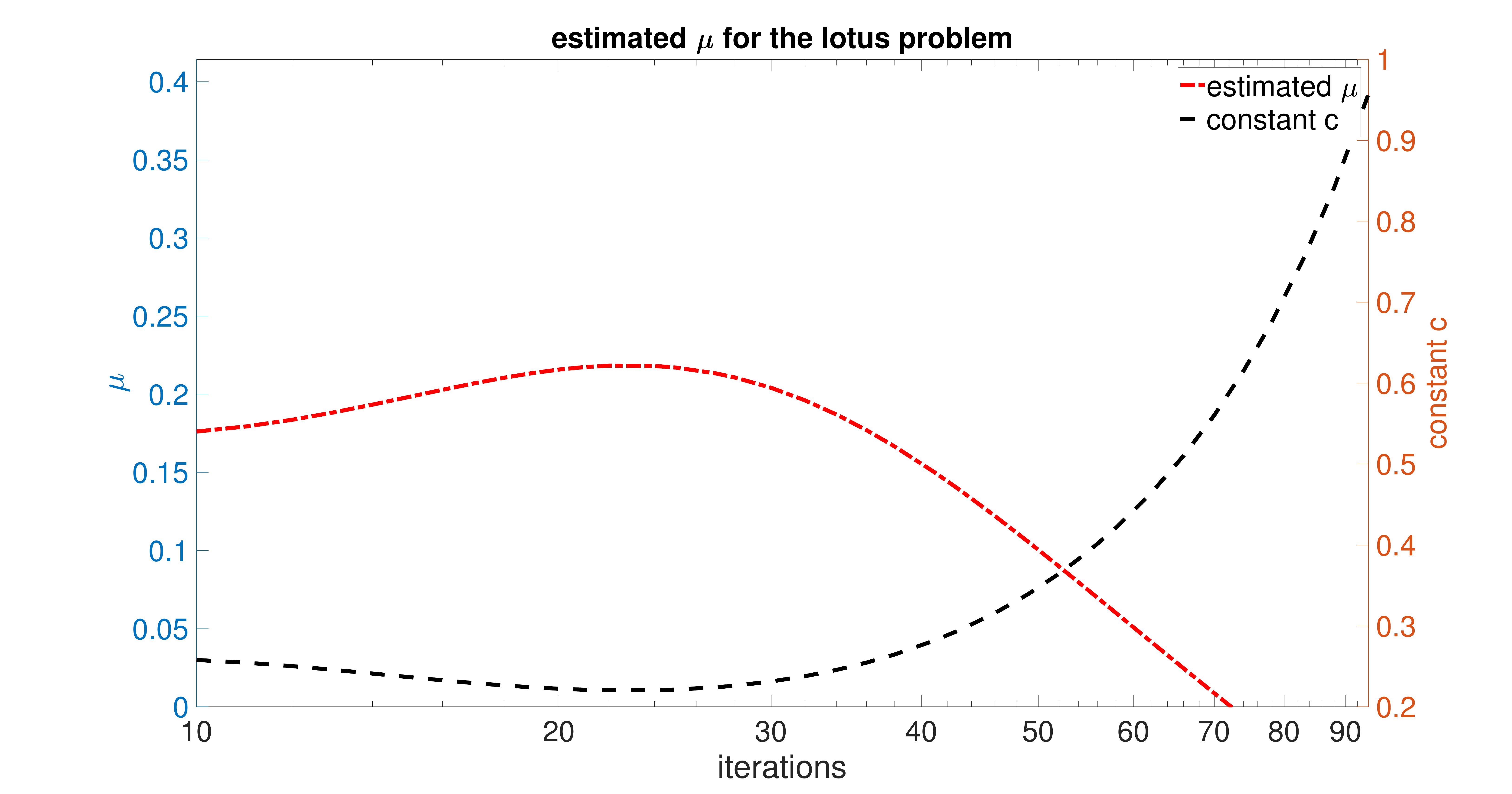}\caption{Demonstration of the method for the lotus problem. We estimate $\mu\approx 0.22$, but have no confirmation for this other than our method.}\label{fig:lotus}
\end{figure}

\begin{figure}
\includegraphics[width=\textwidth]{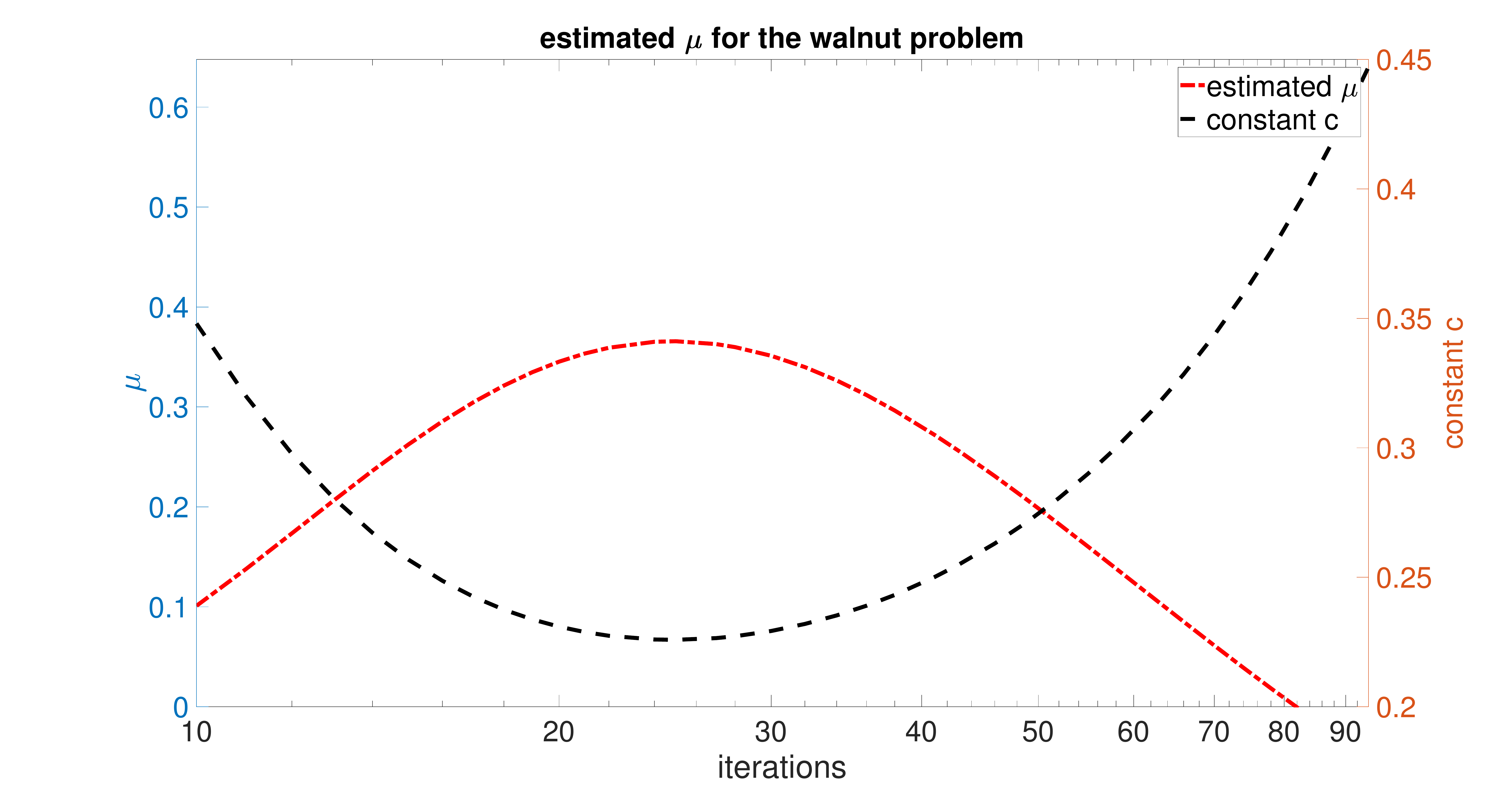}\caption{Demonstration of the method for the walnut problem. We estimate $\mu\approx 0.4$, but have no confirmation for this other than our method.}\label{fig:nuts}
\end{figure}

\begin{figure}
\includegraphics[width=0.32\linewidth]{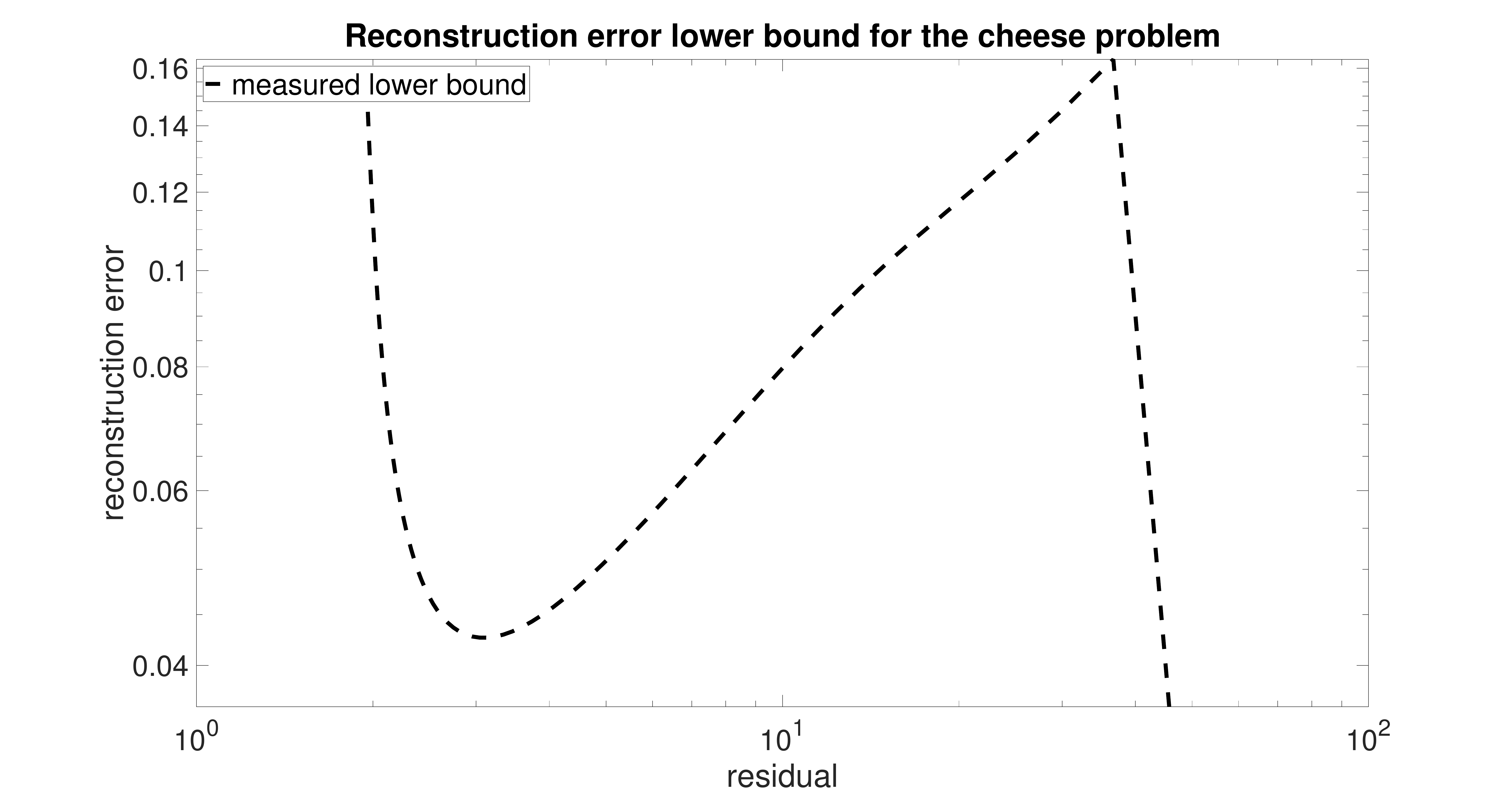}\includegraphics[width=0.32\linewidth]{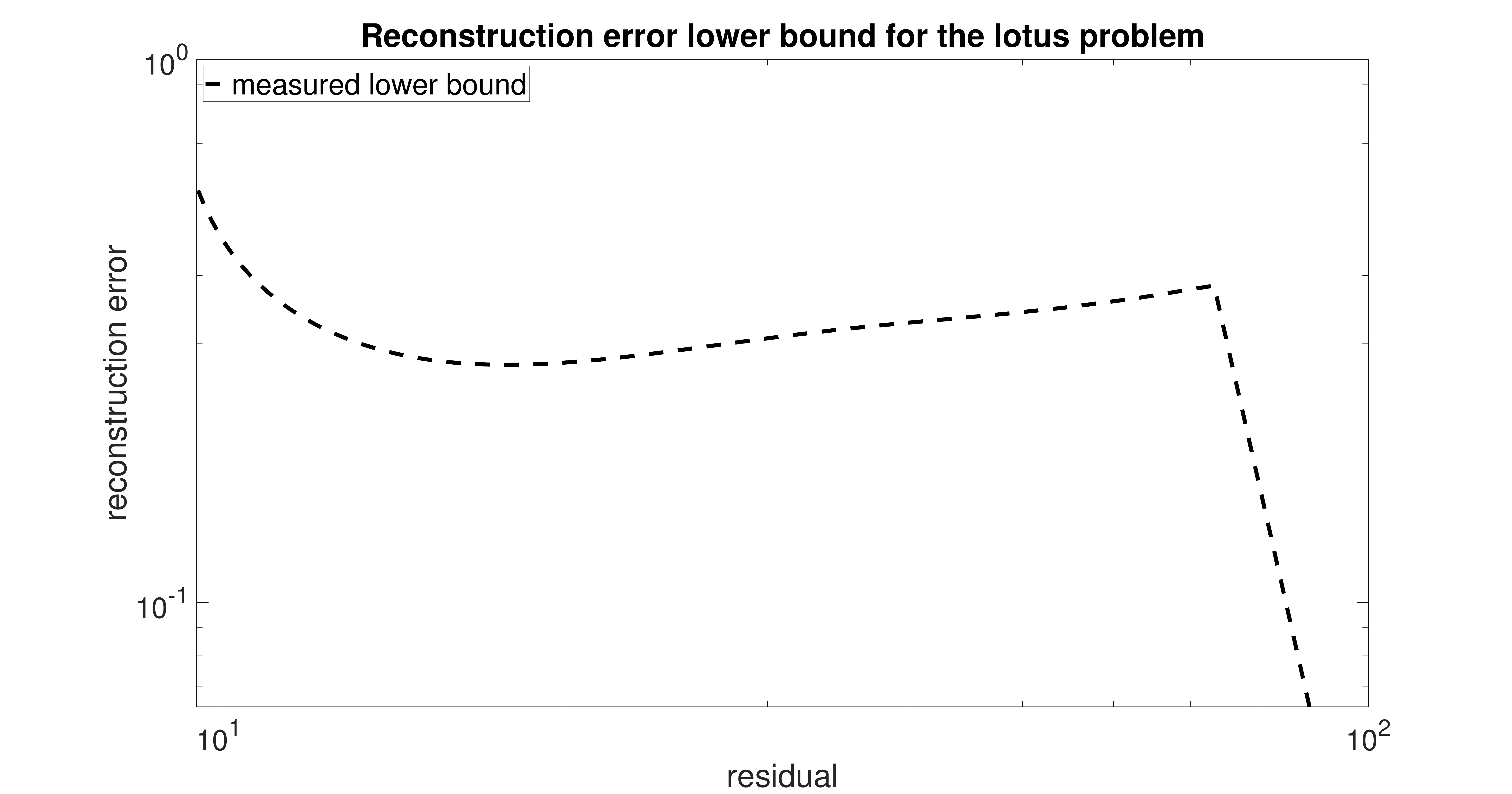}\includegraphics[width=0.32\linewidth]{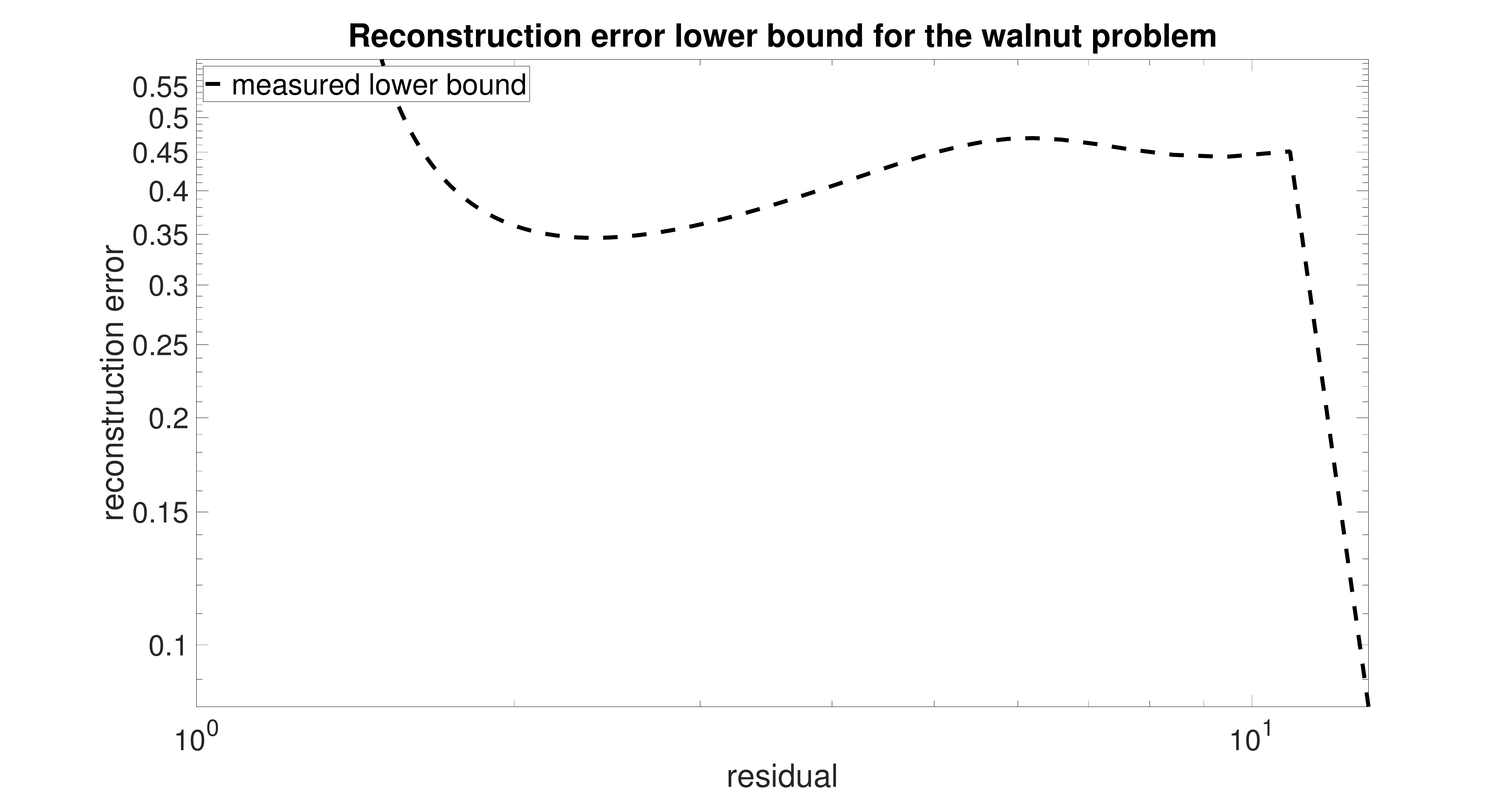}\caption{Observed lower bound from \eqref{eq:lb} for the three real data problems, from left to right: cheese, lotus, and walnut. For higher residuals, the cheese and lotus data have an approximately linear slope, hinting that the estimation of $\mu$ for the first iterations might be correct. The walnut data behaves completely irregular, hence the graph for the estimated $\mu$ in \ref{fig:nuts} is the worst among all 3 problems. For smaller residuals, the lower bound goes up, indicating that the noise takes over. This corresponds to the falling estimates of $\mu$ for the larger iterations in the corresponding Figures \ref{fig:cheese}, \ref{fig:lotus}, and \ref{fig:nuts}.}\label{fig:real_lower}
\end{figure}

\section*{Conclusion}
We have shown that the classical source condition for linear inverse problems implies a \L{}ojasiewicz inequality which is equivalent to convergence rates. We used the relation between residual and gradient that is implied by the \L{}ojasiewicz inequality to estimate the smoothness parameter in the source condition using Landweber iteration. This works reasonably well when the source condition is indeed fulfilled; the obtained estimates for the smoothness parameter agree with related knowledge. We have shown that even noisy and real-life data can be used as long as the noise is not too large. While there are many open problems, we believe that exploiting the \L{}ojasiewicz property may lead to an improvement in the numerical treatment of inverse problems and consider this paper as a first step in this direction.

\section*{Acknowledgments} Research supported by Deutsche Forschungsgemeinschaft (DFG-grant HO 1454/10-1).The author thanks  the anonymous referees for their comments that helped to improve this paper. The helpful comments and discussions with Bernd Hofmann (TU Chemnitz) and Oliver Ernst (TU Chemnitz) are gratefully acknowledged.

\bibliographystyle{unsrt}

\end{document}